\def\namedlabel#1#2{\begingroup
    #2%
    \def\@currentlabel{#2}%
    \phantomsection\label{#1}\endgroup
}
\definecolor{dullmagenta}{rgb}{0.4,0,0.4}   
\definecolor{darkblue}{rgb}{0,0,0.4}
\definecolor{darkgreen}{rgb}{0,0.4,0}
\def\XXint#1#2#3{{\setbox0=\hbox{$#1{#2#3}{\int}$}
     \vcenter{\hbox{$#2#3$}}\kern-.5\wd0}}
\newtheorem{theorem}{Theorem}[section]
\newtheorem*{theorem*}{Theorem}
\newtheorem{lemma}[theorem]{Lemma}
\newtheorem*{lemma*}{Lemma}
\newtheorem{proposition}[theorem]{Proposition}
\newtheorem{corollary}[theorem]{Corollary}
\theoremstyle{definition}
\theoremstyle{remark}
\newtheorem{remark}[theorem]{Remark}
\newtheorem{question*}[theorem]{Question}
\numberwithin{equation}{section}
\theoremstyle{theorem}
\newtheorem{ltheorem}{Theorem}
\title[Dyadic coverings with nondoubling measures]{BMO from dyadic BMO for nonhomogeneous measures}
\author[J.M.\ Conde-Alonso]{Jos\'e M. Conde-Alonso}
\address{\noindent Department of Mathematics, Kassar House, 151 Thayer Street \newline \indent Brown University, 02903 Providence RI, USA}
\email{jconde@math.brown.edu}
\subjclass[2010]{Primary: 46E30. Secondary: 42B20, 46E40}
\keywords{BMO, one third trick, nondoubling measures}
\begin{document}

\allowdisplaybreaks

\begin{abstract}
The usual one third trick allows to reduce problems involving general cubes to a countable family. Moreover, this covering lemma uses only dyadic cubes, which allows to use nice martingale properties in harmonic analysis problems. We consider alternatives to this technique in spaces equipped with nonhomogeneous measures. This entails additional difficulties which forces us to consider martingale filtrations that are not regular. The dyadic covering that we find can be used to clarify the relationship between martingale BMO spaces and the most natural BMO space in this setting, which is the space RBMO introduced by Tolsa.
\end{abstract}

\maketitle


\section*{Introduction}

Dyadic coverings are useful tools in harmonic analysis. When a problem involves manipulating a family of general cubes or balls, many times it is useful to restrict one's attention to a discrete |countable| family of them. Among such families, the most useful are those formed by dyadic cubes. To relate general cubes to dyadic ones, one uses a version of the so called one third trick. Roughly, this states that there exists a finite number of dyadic families such that given any cube $Q$, there exists a dyadic cube $R$ in one of them such that $Q \subset R$ and $\ell(Q) \sim \ell(R)$, or equivalently $|Q|\sim |R|$. The idea behind the one third trick goes back at least to the work of Christ, but more modern approaches have improved or used variants of it, such as \cite{mei2003,conde2013,hytonen-lacey-perez}. Dyadic covering lemmata have found many applications to harmonic analysis, among which we will highlight two: first, the relationship between BMO and its dyadic (or martingale) counterpart is an almost immediate corollary (see \cite{mei2003}). Second, the theory of sparse domination (see \cite{lerner2013JAM}) initiated by Lerner and which has grown tremendously in the last few years seems to require some version of the one third trick.   

The goal of this paper is to study dyadic-like covering arguments in a different context, also motivated by harmonic analysis. In particular, we consider them in the context of nonhomogeneous harmonic analysis. We work on $\mathbb{R}^d$ equipped with a measure $\mu$ of $n$-polynomial growth. This means that $\mu$ is a Radon measure that satisfies
$$
\mu(B(x,r)) \leq C_\mu r^n, \mu \; a.e. \; x.
$$
Without loss of generality, we will always assume that $C_\mu=1$. Measures of $n$-polynomial growth appear naturally in the study of analytic capacity or rectifiability, where harmonic analysis tools have proved to be very useful \cite{tolsaPainleve,nazarov-tolsa-volberg2014}. The main difficulty that measures of of $n$-polynomial growth will pose to us is that they need not be \textit{doubling}, which means that the measure of a ball and the measure of a fixed dilate of it need not be comparable. This means that the usual one third trick is not going to be useful for us, since $\ell(Q) \sim \ell(R)$ may no longer imply $\mu(Q) \sim \mu(R)$. The best we can hope for is to discretize the family of doubling cubes, which is often enough in the applications. Given constants $\alpha>1 $ and $\beta> 0$ we say that a cube or ball $Q$ is $(\alpha,\beta)$-doubling if $\mu(\alpha Q) \leq \beta\mu(Q)$. We can certainly cover $(\alpha,\beta)$-doubling cubes by dyadic cubes while keeping the key property $\mu(Q)\sim \mu(R)$, as we show in appendix \ref{secappendix}. However, this has limited applications because the resulting dyadic families are not complete and do not form a filtration of $\mathbb{R}^d$. Therefore, we resort to more complicated sets for which we can keep the martingale properties.

\begin{ltheorem}\label{thmA}
Fix $\alpha>60$, $C_0> (6 \sqrt{d} \alpha)^d$ and set $\alpha_0 = 6 \sqrt{d} \alpha$. There exist $N=N(d)$ atomic filtrations $\Sigma^1, \Sigma^2, \cdots, \Sigma^N$ of $\mathrm{supp}(\mu) \subset \mathbb{R}^d$ with the following properties:
\begin{enumerate}
\item For each atom $T \in \Sigma^k$, $1\leq k \leq N$, there exists a ball $B_T$ such that 
$$
B_T \cap \mathrm{supp}(\mu) \subset T \subset 30B_T \cap \mathrm{supp}(\mu) \;\; \mathrm{and} \;\; \mu(\alpha B_T) \leq C_0\mu(B_T). 
$$
\item For each $(\alpha_0,C_0)$-doubling cube $Q$ in $\mathbb{R}^d$, there exists $1\leq k \leq N$ and $T\in \Sigma^k$ such that 
$$
Q \cap \mathrm{supp}(\mu) \subset T, \;\; \ell(Q) \sim r(B_T) \;\; \mathrm{and} \;\; \mu(Q) \sim \mu(T).
$$
\end{enumerate}
\end{ltheorem}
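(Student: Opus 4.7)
The plan is to combine a shifted-grid one-third trick with a doubling-scale stopping time. Step 1 produces $N = N(d)$ shifts of the standard dyadic lattice, $\mathcal{D}^1,\ldots,\mathcal{D}^N$, so that every cube $Q \subset \mathbb{R}^d$ is contained in some $R \in \mathcal{D}^k$ with $\ell(R) \leq c_d\,\ell(Q)$ for a dimensional constant $c_d$; by taking $N$ large we make $c_d$ small enough to be compatible with the factor $30$ in the sandwich and the choice $\alpha_0 = 6\sqrt d\,\alpha$. This is the classical shifts-by-thirds construction, used in the same spirit as Mei's paper and the author's previous work.

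Step 2 selects a doubling scale for each dyadic cube. Polynomial growth $\mu(B(x,r))\leq r^n$ together with $C_0 > \alpha_0^d \geq \alpha^n$ forces, for any ball $B$ meeting $\mathrm{supp}(\mu)$, the existence of a smallest $j \geq 0$ such that $\alpha^j B$ is $(\alpha,C_0)$-doubling; otherwise the chain $\mu(\alpha^{j+1}B) > C_0^{j+1}\mu(B)$ for all $j$ would contradict $\mu(\alpha^{j+1}B) \leq (\alpha^{j+1}r(B))^n$ for large $j$. For each $R \in \mathcal{D}^k$ with circumscribed ball $B_R^0$ of radius $\sqrt d\,\ell(R)/2$, set $B_R = \alpha^{j(R)} B_R^0$.

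Step 3 builds $\Sigma^k$ by a top-down stopping time on the dyadic tree of $\mathcal{D}^k$. Call $R$ \emph{good} if $r(B_R) \leq C_d\,\ell(R)$ for a suitable dimensional constant. A good $R$ generates an atom $T_R$ by bundling $R$ with the dyadic cubes of its generation that meet $B_R$, chosen so that $B_R \cap \mathrm{supp}(\mu) \subset T_R \subset 30 B_R \cap \mathrm{supp}(\mu)$; a bad $R$ is absorbed into the atom of its nearest good ancestor. Refinement of successive generations follows from the top-down nature of the stopping: once a good cube becomes its own atom, its dyadic descendants are free to split further as they become good, and bad intermediate cubes merely delay subdivision. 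Property (1) is then built into the construction.

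For property (2), apply Step 1 to an $(\alpha_0,C_0)$-doubling cube $Q$ to obtain $R \in \mathcal{D}^k$ with $Q \cap \mathrm{supp}(\mu) \subset R$ and $\ell(R)\sim\ell(Q)$; let $T$ be the atom of $\Sigma^k$ containing $R \cap \mathrm{supp}(\mu)$. The choice $\alpha_0 = 6\sqrt d\,\alpha$ together with $\alpha > 60$ ensures the geometric inclusion $\alpha B_R^0 \subset \alpha_0 Q$, whence $\mu(\alpha B_R^0) \leq \mu(\alpha_0 Q) \leq C_0\,\mu(Q) \leq C_0\,\mu(B_R^0)$, forcing $j(R)=0$. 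Thus $B_T = B_R^0$ is at scale $\sim \ell(Q)$, and $\mu(T) \leq \mu(30B_T) \leq \mu(\alpha_0 Q) \leq C_0\,\mu(Q)$ completes the verification. The principal obstacle I anticipate is Step 3: arranging the absorption of bad cubes coherently across scales so that the resulting partitions genuinely refine one another, and verifying the sandwich inclusion uniformly across generations in the face of the irregularity caused by non-doubling $\mu$.
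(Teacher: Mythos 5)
Your Step 3 is where the paper's proof and yours diverge, and it is precisely where your sketch leaves a genuine gap. Building the atomic filtrations $\Sigma^k$ directly from a stopping time on the dyadic tree requires you to produce a coherent sequence of nested partitions, but the bundling rule you propose (an atom $T_R$ is $R$ together with the generation-$k$ cubes meeting $B_R$) does not obviously yield a partition: two adjacent good cubes $R$ and $R'$ will in general both want to claim the same neighboring cube, and if a cube $S\subset T_R$ is itself good and generates $T_S$, then $T_S$ and $T_R$ overlap at the same filtration level. Resolving these collisions, and then arranging that atoms at finer generations genuinely refine coarser atoms when bad cubes at many intermediate scales have been absorbed upward, is not a technicality you can wave away -- it is the whole content of the David--Mattila construction. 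The paper does not attempt to rebuild this from scratch. It instead invokes the modified David--Mattila theorem of \cite{condealonso-parcet2018} (quoted here as Proposition \ref{propDMCAP}), which already outputs a full atomic filtration of doubling David--Mattila cubes with the sandwich and small-boundary properties, and the genuinely new ingredient of Theorem \ref{thmA} is a careful choice of the \emph{seed balls} fed into that machinery: the doubling dyadic cubes are split into finitely many families $\mathcal{F}_j$ subject to two compatibility rules (a separation rule at each fixed scale, and a congruence rule between scales tied to the David--Mattila gap $A_0$), and then the modified $5R$-covering Lemma \ref{lemmaVitali} is used to extend each family $\{5B(Q)\}_{Q\in\mathcal{F}_j}$ to a full disjoint cover at each scale before running the construction of \cite{condealonso-parcet2018} verbatim. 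Your proposal contains no analogue of the $\mathcal{F}_j$-partition or the Vitali extension, and without them you cannot control the scales of the balls that enter a single filtration.

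Two smaller issues. First, in your verification of (2), the inclusion $\alpha B_R^0 \subset \alpha_0 Q$ fails as stated: since $R$ contains $Q$ but is not concentric with it, the centers can differ by up to $\sqrt d\,\ell(R)/2$, and a clean computation (as in the paper's display \eqref{computationDoubling}, done with concentric balls $B(Q')\subset\sqrt d\,Q'$) needs to pass through the concentric circumscribed ball of the dyadic cube, not through a ball whose center is offset. Second, ``let $T$ be the atom of $\Sigma^k$ containing $R\cap\mathrm{supp}(\mu)$'' is ambiguous: every level of $\Sigma^k$ has an atom containing $R$, and you have not identified the level at which $\mu(T)\sim\mu(Q)$ holds; if $T$ is the atom generated by some neighboring good $R'\ne R$ you would still need an argument that $\mu(T_{R'})\sim\mu(B_R^0)$. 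The paper avoids both problems because the ball $B(Q')$ is forced into the filtration as the ball of a specific David--Mattila cube $T$ by Proposition \ref{propDMCAP} (respectively by the Vitali step in the finite-$N$ version), so there is no ambiguity about which atom to take.
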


The number $N=N(d)$ in the statement does not depend on the value of $\alpha_0$, but the filtrations themselves do. As the statement suggests, the sets $T \in \Sigma$ will play the role of dyadic cubes of the same dimension as the measure (recall that they are similar to balls). The filtrations $\Sigma^k$ satisfy additional properties, see section \ref{sec1} for details and for precise definitions, that we postpone for now. The structure of each $\Sigma^k$ is well adapted to the study of problems related to Calder\'on-Zygmund operators (see \cite{condealonso-parcet2018}), properties of the harmonic measure (see \cite{azzam-hofmann-martell-mayboroda-mourgoglou-tolsa-volberg}), or rectifiability of sets (see \cite{condealonso-mourgoglou-tolsa2018,david-mattila2000}), to name a few potential scenarios where our result may be of use. To prove theorem \ref{thmA} we will elaborate on the construction of \cite{condealonso-parcet2018}, where a version of the celebrated lattice by David and Mattila \cite{david-mattila2000} is tweaked to get further properties. The main challenge here is to be able to construct several dyadic-like lattices at the same time so that all doubling balls in the space are well adapted to them, something we believe has not been considered before.

As we said above, coverings by cubes in finitely many dyadic filtrations have found many different applications and are nowadays part of the standard set of tools in harmonic analysis. As an application of our methods, we shall prove versions of \textit{BMO from dyadic BMO} results (see \cite{garnett-jones}) for nondoubling measures. In the nonhomogeneous setting, there are natural candidates for both spaces. The role of BMO is usually played by the space RBMO that was introduced by Tolsa in \cite{tolsaRBMO}. This space, whose definition is postponed to section \ref{sec2}, satisfies two key features of the classical BMO class: interpolation with the $L^p$ scale and boundedness of Calder\'on-Zygmund operators. Its dyadic counterpart is denoted by $\mathrm{RBMO}_\Sigma$ and was introduced in \cite{condealonso-parcet2018}. There it was shown that this space, which is nothing but the martingale $\mathrm{BMO}$ space associated with a David-Mattila filtration $\Sigma$, satisfies
$$
\mathrm{RBMO} \subsetneq \mathrm{RBMO}_\Sigma.
$$
Ideally, one would like to prove a statement similar to 
$$
\mathrm{RBMO} = \bigcap_{j=1}^N \mathrm{RBMO}_{\Sigma^j},
$$
with equivalent norms. This would be a direct generalization of the main result in \cite{mei2003}. However, this seems to be false for reasons that will become clear later in the text. What we will do is to define slight variations $\mathrm{RBMO}_\Sigma^*$ of $\mathrm{RBMO}_\Sigma$ (whose precise definition we postpone again to section \ref{sec2}) for which the result holds. This is our second result:

\begin{ltheorem}\label{thmBMOdyad}
Let $\{\Sigma^j\}_{1\leq j \leq N}$ be the family of filtrations given by theorem \ref{thmA}. We have
$$
\mathrm{RBMO} = \bigcap_{j=1}^N \mathrm{RBMO}_{\Sigma^j}^*,
$$
with equivalent norms. Moreover, for all filtrations $\Sigma$ we have that $\mathrm{RBMO}_\Sigma^* \subset \mathrm{RBMO}_\Sigma$.
\end{ltheorem}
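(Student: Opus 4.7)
The argument should follow the classical \textit{BMO from dyadic BMO} scheme of Garnett--Jones and Mei, but with two extra layers of difficulty coming from the nonhomogeneous setting: oscillations must be measured relative to doubling balls, and there is an additional coefficient condition $|m_Q f - m_R f| \lesssim K_{Q,R}$ which has no analogue in the classical case.

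The inclusion $\mathrm{RBMO}_\Sigma^* \subset \mathrm{RBMO}_\Sigma$ should be tautological from the definitions to be given in section \ref{sec2}, since the $*$-variant is by design a strengthening of the martingale BMO. For $\mathrm{RBMO} \subset \bigcap_{j=1}^N \mathrm{RBMO}_{\Sigma^j}^*$, fix $f \in \mathrm{RBMO}$ and $T \in \Sigma^j$. By part (1) of Theorem \ref{thmA}, $T$ is trapped between two concentric balls $B_T$ and $30 B_T$ with $B_T$ being $(\alpha, C_0)$-doubling, so comparing averages over $T$ with averages over a doubling dilate of $B_T$ is controlled by $\|f\|_{\mathrm{RBMO}}$ times a Tolsa coefficient $K_{B_T,30 B_T}$ that is bounded by a dimensional constant. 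The nested coefficient condition in the $*$-norm should be handled analogously, chaining each atom through its associated ball and reading off the estimate from the $K_{Q,R}$ condition of $\mathrm{RBMO}$.

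For the converse, fix $f \in \bigcap_j \mathrm{RBMO}_{\Sigma^j}^*$ and let $Q$ be an $(\alpha_0,C_0)$-doubling cube. Part (2) of Theorem \ref{thmA} gives $k \in \{1,\dots,N\}$ and $T \in \Sigma^k$ with $Q \cap \mathrm{supp}(\mu) \subset T$ and $\mu(T) \sim \mu(Q)$, so
\[
\frac{1}{\mu(Q)} \int_Q |f - m_Q f|\, d\mu \;\leq\; \frac{2}{\mu(Q)} \int_Q |f - m_T f|\, d\mu \;\lesssim\; \frac{1}{\mu(T)} \int_T |f - m_T f|\, d\mu,
\]
and the right-hand side is $\lesssim \|f\|_{\mathrm{RBMO}_{\Sigma^k}^*}$. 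This yields the oscillation half of the RBMO norm immediately. The delicate half is the $K$-coefficient estimate: given doubling $Q \subset R$, I need to chain $m_Q f$ to $m_R f$ through atoms, but the atoms $T_Q,T_R$ furnished by Theorem \ref{thmA} can lie in different filtrations, so there is no native martingale chain joining them. I expect this to be the main obstacle of the proof, and it is exactly what the $*$-modification of $\mathrm{RBMO}_\Sigma$ is tailored to handle: enforcing $|m_S f - m_{S'} f| \lesssim K_{B_S,B_{S'}}$ for nested atoms $S'\subset S$ of $\Sigma^j$ lets one telescope along the unique chain of ancestors of $T_Q$ in $\Sigma^k$ with total cost bounded by a discrete version of $K_{Q,R}$, up to the dimensional losses incurred in Theorem \ref{thmA}.

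This should also illuminate why the analogue with plain $\mathrm{RBMO}_\Sigma$ fails: the dyadic $K$-coefficient built from a single filtration cannot detect scale changes that are invisible to that filtration but visible to $K_{Q,R}$, and the $*$-modification is the minimal strengthening that repairs this defect. The strict inclusion $\mathrm{RBMO} \subsetneq \mathrm{RBMO}_\Sigma$ of \cite{condealonso-parcet2018} appears naturally in this light, as the gap between the two $K$-conditions.
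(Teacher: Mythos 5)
Your overall architecture matches the paper's: the easy inclusion $\mathrm{RBMO}\subset\bigcap_j\mathrm{RBMO}^*_{\Sigma^j}$ by comparing atoms with doubling Euclidean cubes sandwiched between $B_Q$ and $\alpha B_Q$, the tautological $\mathrm{RBMO}^*_\Sigma\subset\mathrm{RBMO}_\Sigma$, and the oscillation half of the converse are all correctly sketched. You also put your finger on the right pressure point, namely the regularity-coefficient term $|\langle f\rangle_T-\langle f\rangle_S|/\delta(T,S)$ for doubling $T\subset S$.

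However, your proposed resolution of that pressure point has a genuine gap. You suggest applying Theorem~\ref{thmA} to \emph{both} $T$ and $S$, observing the resulting atoms $T_Q$ and $T_R$ may live in different filtrations, and then telescoping along the ancestor chain of $T_Q$ in its own filtration $\Sigma^k$. The problem is that nothing controls the final ancestor in that chain: Theorem~\ref{thmA} guarantees $\mu(T_Q)\sim\mu(T)$ and $r(B_{T_Q})\sim\ell(T)$ for the atom associated to $T$, but says nothing about the measure or radius of any \emph{ancestor} $T_Q^{(m)}$ of $T_Q$ relative to $S$. The smallest ancestor of $T_Q$ containing $S$ can have measure and radius far larger than those of $S$, so the final comparison $|\langle f\rangle_{T_Q^{(m)}}-\langle f\rangle_S|$ is not controlled and the telescoping cost $\delta(T_Q,T_Q^{(m)})$ need not be $\lesssim\delta(T,S)$. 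The paper sidesteps this by applying Theorem~\ref{thmA} \emph{only to the larger cube} $S$, getting a single atom $Q\in\Sigma^j$ with $S\subset Q$, $\mu(S)\sim\mu(Q)$, and then covering the smaller cube $T$ by the family $\mathcal{R}$ of maximal descendants of $Q$ in $\Sigma^j$ contained in $10T$. One then writes $\langle f\rangle_T - \langle f\rangle_S$ as $(\langle f\rangle_T - \sum_R\tfrac{\mu(T\cap R)}{\mu(T)}\langle f\rangle_R) + (\sum_R\tfrac{\mu(T\cap R)}{\mu(T)}\langle f\rangle_R - \langle f\rangle_Q) + (\langle f\rangle_Q - \langle f\rangle_S)$. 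The first term is handled by the doubling of $T$ and the oscillation part of the $*$-norm, the third by the adapted-measure comparison $\mu(S)\sim\mu(Q)$, and the middle term by the $*$-norm's coefficient condition \emph{together with} a dedicated geometric lemma $\delta(R,Q)\lesssim\delta(T,S)$ for every $R\in\mathcal{R}$, proved by splitting the annulus $\alpha B_Q\setminus\alpha B_R$ into three pieces and using maximality of $R$ in $\mathcal{R}$ to bound the innermost shell. That geometric estimate is the real content of the hard direction, and your sketch does not supply it.
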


The second part of theorem \ref{thmBMOdyad} implies that the variations $\mathrm{RBMO}_\Sigma^*$ still interpolate with the $L^p$ scale while the first implies that $L^2$ bounded Calder\'on-Zygmund operators map $L^\infty(\mu)$ into $\mathrm{RBMO}_\Sigma^*$. This means that our new spaces $\mathrm{RBMO}_\Sigma^*$ are suitable counterparts of dyadic BMO in the nonhomogeneous setting. Finally, we remark that in the same spirit as in \cite{condealonso-parcet2018}, all our results generalize directly to the operator valued setting with very minor changes that we omit and that can be easily figured out by the interested reader. 

\subsection*{Remark about notation} In this paper we use different kinds of martingale filtrations. To avoid confusion, we shall use the letter $\mathcal{D}$ (maybe with superscripts) to denote usual dyadic filtrations. We will use $\mathscr{D}$ for David-Mattila filtrations while we keep $\Sigma$ for David-Mattila filtrations where all the David-Mattila cubes are doubling (see section \ref{sec1} for details).

\subsection*{Acknowledgment} The author wants to thank Jill Pipher for initially pointing out the main question addressed in this note and Alex Barron for finding a key mistake in an earlier version of this manuscript.


\section{The one third trick for nondoubling measures} \label{sec1}

\subsection{David-Mattila filtrations} The sets that we are going to deal with are the ones that appear in the statement of theorem \ref{thmA}. We will call them David-Mattila cubes in what follows, and they are sets $Q$ which have the following property: there exists a ball $B_Q$ such that $B_Q \cap \mathrm{supp}(\mu) \subset Q \subset 30B_Q \cap \mathrm{supp}(\mu)$. Given $\alpha>30$, we say that a David-Mattila cube $Q$ is $(\alpha,\beta)$-doubling if its associated ball $B_Q$ is $(\alpha,\beta)$-doubling, that is, if $\mu(\alpha B_Q) \leq \beta \mu(B_Q)$. 

\begin{proposition}[Theorem 3.2 in \cite{david-mattila2000}] \label{propDM}
Let $\mu$ be a Radon measure on $\mathbb{R}^{d}$ of $n$-polynomial growth. Fix $\alpha>1$. Then, there exist $C_0=C_0(\alpha)>1$ and $A_0>5000\,C_0$, such that for each choice of $1\leq \tilde{C}_0 < _0$ there exists a sequence $\mathscr{D}= \cup_{k} \mathscr{D}_k$ of partitions of $\mathrm{supp}(\mu)$ into
Borel subsets $Q$ with the following properties:
\begin{itemize}

\item If $k<\ell$, $Q\in\mathscr{D}_{k}$, and $R\in\mathscr{D}_{\ell}$, then either $Q\cap R=\varnothing$ or else $Q\subset R$.

\item For each $k$ and each cube $Q\in\mathscr{D}_{k}$, there is a ball $B_Q=B(x_Q,r(Q))$ such that
$$x_Q\in \mathrm{supp}(\mu), \qquad C_0^{-1} \tilde{C}_0 A_0^{-k}\leq r(Q)\leq \tilde{C}_0 \,A_0^{-k},$$
$$\mathrm{supp}(\mu)\cap B(Q)\subset Q\subset \mathrm{supp}(\mu)\cap 30B(Q)=\mathrm{supp}(\mu) \cap B(x_Q,30r(Q)),$$
and the balls $5B(Q)$, $Q\in\mathscr{D}_{k}$, are disjoint.
\item The balls $\frac12B_Q$ and $\frac12B_{Q'}$ associated with $Q \not= Q'$ are disjoint unless $Q \subset Q'$ or $Q' \subset Q$.
\item The cubes $Q\in\mathscr{D}_{k}$ have small boundaries: for each $Q\in\mathscr{D}_{k}$ and each
integer $\ell\geq0$, set
$$N_{\ell}^{ext}(Q)= \{x\in \mathrm{supp}(\mu)\setminus Q:\,\mathrm{dist}(x,Q)< A_0^{-k-\ell}\},$$
$$ N_{\ell}^{int}(Q)= \{x\in Q:\,\mathrm{dist}(x,\mathrm{supp}(\mu)\setminus Q)< A_0^{-k-\ell}\},$$
and
$$N_{\ell}(Q)= N_{\ell}^{ext}(Q) \cup N_{\ell}^{int}(Q).$$
Then
\begin{equation*}
\mu(N_{\ell}(Q))\leq (C^{-1}C_0^{-3d-1}A_0)^{-\ell}\,\mu(90B_Q).
\end{equation*}

\item If $Q\in\mathscr{D}_{k}$ is not $(\alpha,C_0)$-doubling then $r(Q)=A_0^{-k}$ and
\begin{equation*}
\mu(\alpha B_Q)\leq C_0^{-\ell}\,\mu(\alpha^{\ell+1}B_Q)\quad
\mbox{for all $\ell\geq1$ with $\alpha^\ell\leq C_0$.}
\end{equation*}
\end{itemize}

\end{proposition}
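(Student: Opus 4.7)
The plan is to mimic David and Mattila's original construction, building the partitions $\mathscr{D}_k$ generation by generation while making three independent choices at each scale: the centers, the radii, and the boundaries. Fix $A_0 \gg C_0 \gg \alpha$ (to be quantified at the end). At generation $k$, begin with a maximal $5\tilde{C}_0 A_0^{-k}$-separated family $\{x_i^k\} \subset \mathrm{supp}(\mu)$; these will be the centers $x_Q$. The separation is exactly what forces the balls $5B(x_i^k, \tilde{C}_0 A_0^{-k})$ to be disjoint, and maximality ensures that the balls $30B(x_i^k, \tilde{C}_0 A_0^{-k})$ cover $\mathrm{supp}(\mu)$, which is necessary for the eventual cubes to partition the support.

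Next I would choose the radius $r(Q)$ for each center. The goal is to make the associated ball $(\alpha,C_0)$-doubling whenever possible. The standard trick is to look at the geometric sequence of candidate radii $\tilde{C}_0 A_0^{-k} \alpha^{-j}$ for $j=0,1,\dots,J$ with $\alpha^J \approx C_0$, and notice that if all of these were non-$(\alpha,C_0)$-doubling, then $\mu(\tilde{C}_0 A_0^{-k} B) \ge C_0^J \mu(C_0^{-1}\tilde{C}_0 A_0^{-k} B)$, which conflicts with $n$-polynomial growth once $C_0 > \alpha^n$. Picking the smallest doubling radius in that range (or the default $A_0^{-k}$ if none exists) yields the radius bounds and the doubling alternative stated in the last bullet.

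For the sets themselves I would proceed top-down: having constructed $\mathscr{D}_{k-1}$, refine each $R\in\mathscr{D}_{k-1}$ by assigning every $y\in R$ to the (generation-$k$) center nearest $y$ in a suitably weighted sense, breaking ties arbitrarily. This Voronoi-type rule automatically gives the nesting property and, combined with the separation of centers, yields $B_Q \cap \mathrm{supp}(\mu) \subset Q \subset 30 B_Q \cap \mathrm{supp}(\mu)$ as well as the near-disjointness of the half-balls $\tfrac{1}{2}B_Q$ for non-nested $Q,Q'$ (same generation: separation; different generations: radii differ by a factor $\ge A_0 \gg 1$).

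The main obstacle is the small-boundary estimate, which is what forces $A_0 \gg C_0^{3d+1}$. The idea is to build a one-parameter family of candidate refinements by softening the Voronoi rule: instead of assigning $y$ to the nearest center, allow a threshold $t$ in a range of length comparable to $\tilde{C}_0 A_0^{-k}$ that determines which cell $y$ belongs to whenever it is near the midpoint between two centers. The neighborhoods $N_\ell(Q)$ corresponding to different thresholds are nearly disjoint, so summing their measures and comparing to $\mu(90 B_Q)$ via polynomial growth produces, by pigeonhole, a choice of threshold for which $\mu(N_\ell(Q)) \lesssim (C^{-1}C_0^{-3d-1}A_0)^{-\ell}\mu(90 B_Q)$; making such a choice at every scale gives the stated bound. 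Once this is in place, all the bullets can be verified directly from the construction.
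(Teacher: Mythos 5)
This statement is quoted, not proved, in the paper: it is presented verbatim (modulo notation) as Theorem~3.2 of David and Mattila's paper and the paper relies on the citation rather than reconstructing the argument. So there is no ``paper's own proof'' for your attempt to be checked against; instead, your proposal should be measured against the original David--Mattila construction, which is considerably more intricate than your outline.

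Your sketch captures the broad strokes correctly (separated nets for centers, a geometric radius search combined with polynomial growth to force a doubling scale or the stated alternative, iterated refinement for nesting, and a pigeonhole argument for the small boundaries), but there are genuine gaps in how the pieces fit together. First, the constants in the first step do not close: a maximal $5\tilde C_0 A_0^{-k}$-separated net gives Voronoi cells of diameter up to $\sim 10\tilde C_0 A_0^{-k}$, while a ball $B_Q$ whose radius has been shrunk to $C_0^{-1}\tilde C_0 A_0^{-k}$ by the doubling search only provides room $30 r(Q) = 30 C_0^{-1}\tilde C_0 A_0^{-k}$, which is far too small once $C_0$ is large, so the inclusion $Q\subset 30 B_Q$ is not automatic from a Voronoi rule. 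David and Mattila resolve this with a hierarchical stopping-time construction of the cells, not a nearest-center assignment, precisely to keep the cells commensurate with the (possibly small) chosen radii. Second, the small-boundary estimate is the heart of the argument and it is not a single pigeonhole at each scale: the thresholds must be chosen consistently across generations so that the nesting is preserved, and the bound $(C^{-1}C_0^{-3d-1}A_0)^{-\ell}$ comes out of a layered summation over those generations, not from comparing $N_\ell(Q)$ to $\mu(90B_Q)$ in isolation. Your sketch, as written, does not explain how to make the threshold choice at scale $k$ compatible with both its parent and its children. Finally, the penultimate bullet (disjointness of $\tfrac12 B_Q$ and $\tfrac12 B_{Q'}$ across generations unless nested) also requires an argument when the radii differ, since two half-balls of very different sizes can easily overlap without one cell containing the other; separation of same-generation centers alone does not give this.
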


The roles of the various constants above are the following: $A_0$ is the quotient between the respective side lengths of a David-Mattila cube and its children. $\alpha$ and $C_0$ are the relevant doublingness constants and $\tilde{C}_0$ is just a parameter that one can vary to make sure that any given side length can be realized in a David-Mattila filtration. The last item in proposition \ref{propDM} implies that if $Q \in \mathscr{D}$ is $(\alpha,C_0)$-doubling and $\widehat{Q}$ is the smallest $(\alpha,C_0)$-doubling cube in $\mathscr{D}$ that contains it properly, then
\begin{equation}\label{eqweakregularity}
\int_{\alpha B_{\widehat{Q}} \setminus \alpha B_Q} \frac{1}{|x_Q-y|^n} d\mu(y) \lesssim_{\alpha,A_0} 1.
\end{equation}
This property is useful in applications to harmonic analysis, as we will see in section \ref{sec2}. In \cite{condealonso-parcet2018} the David-Mattila construction is modified to yield a filtration with a few additional properties. In particular, the side length of the cubes is no longer uniformly bounded from above and all the David-Mattila cubes in the filtration are doubling. In addition, the construction of \cite{condealonso-parcet2018} allows one to choose a particular $(\alpha,\beta)$-doubling ball to be a ball $B_Q$ associated to some David-Mattila cube. The result is the following:

\begin{proposition}[Theorem A in \cite{condealonso-parcet2018}] \label{propDMCAP}
Let $\mu$ be a measure of $n$-polynomial growth on $\mathbb{R}^d$. Fix $\alpha>1$ and an $(\alpha,C_0)$-doubling ball $B$.Then there exists a positive constant $C_0=C_0(\alpha)$  and a two-sided filtration of atomic $\sigma$-algebras of $\mathrm{supp}(\mu)$ generated by a sequence of nested partitions $\Sigma=\{\Sigma_k: k \in \mathbb{Z}\}$ that satisfies the following properties:
\begin{itemize}
\item The union of $L^\infty(\mathbb{R}^d, \sigma(\Sigma_k), \mu)$ is weak-$*$ dense in $L^\infty(\mu)$. That is, we have
$$
\lim_{Q \to x, Q \in \Sigma} \frac{1}{\mu(Q)} \int_Q f \; d\mu = f(x), \; \mu-\mbox{a.e.} \; x.
$$

\item If $Q \in \Sigma$, then $Q$ is a David-Mattila cube, so there exists an $(\alpha,C_0)$-doubling ball $B_Q$ with $B_Q \subset Q \subset 30B_Q$.

\item There exists $Q \in \Sigma$ such that $B_Q=B$.
\end{itemize}
\end{proposition}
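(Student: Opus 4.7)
The plan is to bootstrap from the David-Mattila construction of Proposition \ref{propDM}, performing three distinct modifications: (i) seeding the construction so that the given ball $B$ appears as the ball associated with some atom; (ii) eliminating non-doubling atoms from the filtration; and (iii) extending the filtration to arbitrarily negative indices $k \to -\infty$ so that it is two-sided.

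For (i), I would choose the auxiliary parameter $\tilde{C}_0 \in [1,C_0)$ so that $r(B) = \tilde{C}_0 A_0^{-k_0}$ for some integer $k_0$, and insert the center of $B$ into the maximal $A_0^{-k_0}$-separated family of centers used to construct $\mathscr{D}_{k_0}$. Because $B$ is $(\alpha,C_0)$-doubling and its center lies in $\mathrm{supp}(\mu)$, this forces an atom $Q \in \mathscr{D}_{k_0}$ with $B_Q = B$. The separation property and consistent choice of centers across scales must be checked, but both are standard in Vitali-type selection arguments.

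For (ii), the approach is to relabel generations: retain only the $(\alpha,C_0)$-doubling atoms of $\mathscr{D}$, and for each such $Q$ declare the children of $Q$ in $\Sigma$ to be the \emph{maximal} doubling descendants of $Q$ in $\mathscr{D}$. Whenever a David-Mattila child of $Q$ is non-doubling, one descends through it until a doubling level is reached. The final item of Proposition \ref{propDM} combined with $n$-polynomial growth prohibits an infinite chain of non-doubling cubes: iterating $\mu(\alpha B_Q) \leq C_0^{-\ell}\mu(\alpha^{\ell+1}B_Q)$ would force super-polynomial growth of the measure. I expect this step to be the main obstacle, since one must verify that the selected maximal doubling descendants at each relabeled level still form a partition of $\mathrm{supp}(\mu)$; this ultimately reduces to showing that $\mu$-almost every point of $Q$ is contained in some doubling descendant, which follows from the measure concentration estimate above.

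For (iii), to extend to coarse scales, polynomial growth supplies a nested sequence of doubling balls $B(x_0,R_j)$ with $R_j \to \infty$: if no such sequence existed, repeated failure of the doubling condition at arbitrarily large radii would eventually contradict $\mu(B(x_0,r)) \leq r^n$. Each such ball is declared an atom at a correspondingly coarse level of $\Sigma$, and the previous construction restricts to it. Finally, the weak-$*$ density claimed in item 1 follows from the martingale convergence theorem: the small-boundary property (item 4 of Proposition \ref{propDM}) propagates to the modified filtration and ensures that the atoms of $\Sigma_k$ shrink to points $\mu$-a.e.\ as $k \to \infty$, which yields $\mu$-a.e.\ convergence of conditional expectations and thus weak-$*$ density of $\bigcup_k L^\infty(\mathbb{R}^d,\sigma(\Sigma_k),\mu)$ in $L^\infty(\mu)$.
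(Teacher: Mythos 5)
The paper does not actually prove this proposition; it is quoted (as ``Theorem A'') directly from \cite{condealonso-parcet2018}, and the only guidance the present text gives is Remark~\ref{schemeOfProof}, which says that the proof follows Proposition~\ref{propDM} with the essential modification confined to the step where the balls $B_Q$ are selected. Your three-part plan correctly isolates the three ways in which $\Sigma$ differs from the raw David--Mattila lattice $\mathscr{D}$ (a prescribed ball appears as some $B_Q$; only doubling atoms survive; the filtration is two-sided), so the overall structure of your argument is consistent with what the source paper does.

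That said, there is a genuine gap in your step (ii) as written. Declaring the children of a doubling atom $Q$ to be its maximal doubling descendants produces a \emph{tree}, but not automatically a $\mathbb{Z}$-indexed sequence of nested \emph{partitions}: maximal doubling descendants of a single $Q$ can live at wildly different David--Mattila generations, and the naive assignment ``to each $x$ and $k$, the smallest doubling $\mathscr{D}$-cube of level $\leq k$ containing $x$'' does \emph{not} give a partition (two points of the same cube can be assigned nested but unequal cubes, e.g.\ when one lies in a doubling child and the other only in a non-doubling one). To fix this one must either relabel the tree depth and allow atoms to persist across several consecutive levels until their doubling children appear, or, as the proof of Theorem~\ref{thmA} in this paper actually illustrates, modify the Vitali-type ball selection so that at each scale the chosen balls are already the largest available doubling balls within a fixed radius window (Lemma~\ref{lemmaVitali} with the radius-selection rule $C_0^{-1}2^{-k-1}\sqrt d \leq r(x) \leq 2^{-k-1}\sqrt d$), so that non-doubling balls appear only with the minimum admissible radius and the David--Mattila termination estimate applies. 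You should also make explicit that the small-boundary property of Proposition~\ref{propDM} survives the passage to $\Sigma$, since without it the $\mu$-a.e.\ martingale convergence in item~(1) does not follow; the present paper only asserts this preservation in the remark following the proposition.

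Your steps (i) and (iii) are fine modulo routine checks: choosing $\tilde C_0$ to match $r(B)$ to a David--Mattila scale and seeding the center of $B$ into the covering is exactly the modification the remark refers to, and the existence of arbitrarily large concentric $(\alpha,\beta)$-doubling balls around any point of $\mathrm{supp}(\mu)$ for $\beta>\alpha^n$ is the standard argument you describe.
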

   
\begin{remark}
Proposition \ref{propDMCAP} is proved using a similar construction than proposition \ref{propDM}. As a result, the thin boundaries of the David-Mattila cubes are preserved, and so is \eqref{eqweakregularity}, which justifies our notation $\widehat{Q}$ for the smallest doubling ancestor of $Q$ (it is strictly the father in the filtration $\Sigma$).
\end{remark}

\begin{remark}\label{schemeOfProof} To prove proposition \ref{propDMCAP}, one needs to slightly modify the proof of proposition \ref{propDM}. The main change that one needs to implement affects only the first step of the proof, which is where the balls $B_Q$ associated to the David-Mattila cubes are chosen. Unfortunately, we cannot use the statement of proposition \ref{propDMCAP} to prove theorem \ref{thmA}. Instead, we need to modify again the first step of the proof of proposition \ref{propDM} to carefully choose the balls $B_Q$ in each of the filtrations that we construct. We shall justify that this is possible below. After that, we can just carry out the rest of the argument in section $1$ of \cite{condealonso-parcet2018} step by step. 
\end{remark}

To select the balls $B_Q$ associated to David-Mattila cubes in the filtrations $\Sigma^k$ that we will construct, we shall use the following easy modification of the $5R$ covering lemma:

\begin{lemma}\label{lemmaVitali}
Let $E \subset \mathbb{R}^d$ be a set and $\mathcal{B}_0$ a countable family of balls of the same radius $R$ and which are pairwise disjoint. Assume that for each $x\in E \setminus \cup_{B \in \mathcal{B}_0} B$ there exists a ball $B_x$ with radius $r_x\leq R$. Then, there exists a countable subcollection $\mathcal{B}_1 \subset \{B_x\}_{x\in (E \setminus \cup_{B \in \mathcal{B}_0} B)}$ satisfying:
\begin{itemize}
\item The balls in $\mathcal{B}:= \mathcal{B}_0 \cup \mathcal{B}_1$ are pairwise disjoint.
\item $E \subset \cup_{B \in \mathcal{B}} 5B$.
\end{itemize} 
\end{lemma}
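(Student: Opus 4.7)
The plan is to run the classical $5R$-covering argument on the subcollection of balls $\{B_x\}$ that are already disjoint from every ball in $\mathcal{B}_0$, and to separately verify coverage of the remaining points using the uniform radius $R$ on $\mathcal{B}_0$. The whole point is that the prescribed balls in $\mathcal{B}_0$ cannot obstruct the selection, because any $B_x$ that meets one of them is automatically engulfed by its $5$-dilate.

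Concretely, I would first set
$$
\mathcal{F} = \bigl\{ B_x : x \in E \setminus \cup_{B\in\mathcal{B}_0} B \text{ and } B_x \cap B = \varnothing \text{ for every } B \in \mathcal{B}_0 \bigr\}.
$$
Since $r_x \leq R$ for every $B_x \in \mathcal{F}$, the usual greedy Vitali procedure applies: sort the balls in $\mathcal{F}$ by dyadic radius scale $2^{-j-1}R < r \leq 2^{-j}R$, and at each step $j \geq 0$ choose (by Zorn or Lindel\"of separability in $\mathbb{R}^d$) a maximal pairwise disjoint subcollection among balls in the $j$-th scale that do not meet any previously selected ball. This produces a countable, pairwise disjoint family $\mathcal{B}_1 \subset \mathcal{F}$ with the property that every $B \in \mathcal{F}$ satisfies $B \subset 5B'$ for some $B' \in \mathcal{B}_1$ with $r(B') \geq r(B)/2$.

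Setting $\mathcal{B} = \mathcal{B}_0 \cup \mathcal{B}_1$, disjointness is immediate on each side and also across the two families by the very definition of $\mathcal{F}$. For the covering property, fix $x \in E$. If $x$ already lies in some $B \in \mathcal{B}_0$, then trivially $x \in B \subset 5B$. Otherwise $B_x$ is defined. If $B_x \in \mathcal{F}$, the Vitali step gives $B' \in \mathcal{B}_1$ with $x \in B_x \subset 5B'$. If $B_x \notin \mathcal{F}$, then $B_x$ meets some $B = B(z, R) \in \mathcal{B}_0$, and since $x$ is the center of $B_x$ with $r_x \leq R$, we get $|x - z| \leq r_x + R \leq 2R$, hence $x \in 2B \subset 5B$. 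In all cases $x$ is covered by a $5$-dilate of some element of $\mathcal{B}$.

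There is no real obstacle here: the uniform radius on $\mathcal{B}_0$ is exactly what is needed so that a ``blocked'' ball $B_x$ automatically falls inside the $5$-dilate of its blocker, and once this observation is isolated the statement reduces to the standard Vitali $5R$ lemma applied to $\mathcal{F}$.
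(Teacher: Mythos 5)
Your proof is correct and rests on exactly the same observation the paper uses: because the balls in $\mathcal{B}_0$ have the maximal radius $R$ and are disjoint, any $B_x$ that meets one of them has its center absorbed into the $5$-dilate of the blocker, so $\mathcal{B}_0$ never obstructs the covering. The paper phrases this as ``run the classical $5R$ algorithm on the combined family $\mathcal{B}_0 \cup \{B_x\}$ and always select $\mathcal{B}_0$ first,'' while you pre-filter to the subfamily $\mathcal{F}$ disjoint from $\mathcal{B}_0$, run the unmodified $5R$ lemma there, and then separately check coverage of the points whose $B_x$ was discarded; these two presentations are interchangeable and the mathematical content is the same, though yours reduces cleanly to the classical theorem as a black box rather than modifying the greedy selection step. (One small implicit assumption you use, as does the paper, is that $x \in B_x$ — e.g.\ that $B_x$ is centered at $x$ — which is how $B_x$ is chosen in the application and is what makes the estimate $|x-z| \leq r_x + R \leq 2R$ go through.)
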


\begin{proof}
The proof is just the usual one of the classical $5R$ covering theorem applied to the family of balls 
$$
\mathcal{B}_0 \cup \{B_x\}_{x\in (E \setminus \cup_{B \in \mathcal{B}_0} B)},
$$
with the only modification that we always pick the balls in $\mathcal{B}_0$ first. This can be done because their radii are maximal and they are disjoint.
\end{proof}

\subsection{Proof of theorem \ref{thmA}} We first prove item (1). This is however immediate since we will use $N$  filtrations as the one in proposition \ref{propDMCAP}. We will specify how we choose the $N$ families and justify the value of $\alpha$ below.

Next, in order to prove (2) we first discretize the family of cubes that we deal with. This is done via the usual one third trick. We use the version in appendix \ref{secappendix}: given an $(\alpha_0,C_0)$-doubling cube $Q$ we can use lemma \ref{lemma2cubos} to find a dyadic cube $Q'$ belonging to one of $3^d$ dyadic filtrations $\mathcal{D}^1, \cdots, \mathcal{D}^{3^d}$ such that $Q \subset Q' \subset 6Q$. These inclusions imply 
\begin{equation}\label{computationDoubling}
\mu\left(\frac{\alpha_0}{6}Q'\right) \leq \mu(\alpha_0 Q) \leq C_0 \mu(Q) \leq C_0 \mu(Q'),
\end{equation}
so $Q'$ is $(\alpha_0',C_0)$-doubling, with $\alpha_0'=\alpha_0/6$. Therefore, we have reduced (2) to proving the same statement for $(\alpha_0',C_0)$-doubling cubes belonging to the union of $3^d$ dyadic filtrations. Note that this is a countable family.

We now turn to the choice of the $N$ filtrations. We first show an easier case: if we admit $N=\infty$ then this is directly given by proposition \ref{propDMCAP}. Indeed, for each $(\alpha_0',C_0)$-doubling cube $Q$ belonging to one of the $3^d$ dyadic filtrations $\mathcal{D}^1, \cdots, \mathcal{D}^{3^d}$, consider the ball $B(Q)$ with the same center as $Q$ and radius $r= \sqrt{d} \; \ell(Q)/2$. By definition, we have $Q \subset B(Q) \subset \sqrt{d} Q$ and so by a computation similar to \eqref{computationDoubling} $B(Q)$ is $(\alpha_0'',C_0)$-doubling with $\alpha_0''= \alpha_0'/\sqrt{d}$. Therefore, we may apply proposition \ref{propDMCAP} with $\alpha=\alpha_0''$ and $B=B(Q)$ to get a filtration that we denote $\Sigma^Q$. Then we just consider all the filtrations $(\Sigma^Q)_{Q}$ and we check both items in the statement of theorem \ref{thmA}: on the one hand, as we said above, (1) follows because our filtrations $\Sigma^{Q'}$ have $(\alpha,C_0)$-doubling associated balls, with $\alpha = \alpha_0''= \alpha_0/(6\sqrt{d})$. One the other hand, for each $(\alpha_0,C_0)$-doubling cube $Q$, we can pick a David-Mattila cube $T$ that satisfies all the properties in (2) as follows: $T$ is the David-Mattila cube of the filtration $\Sigma^{Q'}$ associated with the ball $B(Q')$, where $Q'$ is the $(\alpha_0',C_0)$-doubling dyadic cube given by the application of lemma \ref{lemma2cubos} to $Q$.
\

\

\
We now push the argument forward a little bit to show that we may actually consider only a finite number of filtrations. To that end, as we explained in remark \ref{schemeOfProof}, we need to follow the scheme of the proof of proposition \ref{propDMCAP} with a new selection procedure to choose the balls $B_T$ associated to David-Mattila cubes $T$. We start again discretizing the family of cubes and we assume that we have fixed one of the $3^d$ dyadic families that we denote $\mathcal{D}$. Recall that we only need to worry about cubes in $\mathcal{D}$ that are $(\alpha_0',C_0)$-doubling. Our next step is to partition the $(\alpha_0',C_0)$-doubling cubes in $\mathcal{D}$ into finitely many families $\{\mathcal{F}_j\}_{1 \leq j \leq N_0}$ according to the following two rules:

\begin{enumerate}
\item[(i)] If two different cubes $Q$ and $Q'$ with the same side length $2^{-k}$ belong to $\mathcal{F}_j$, the distance between them is at least $5\cdot2^{-k}\sqrt{d}$. 
\item[(ii)] If two different cubes $Q$ and $Q'$ with different respective side lengths $2^{-k}$ and $2^{-\ell}$, $k<\ell$, belong to $\mathcal{F}_j$, then $2^{-k+\ell} =A_0^m$ for some positive integer $m$. 
\end{enumerate}
The above splitting is obviously possible, and the smallest number $N_0$ depends at most exponentially on the dimension $d$. Recall that $A_0$ is precisely the distance between generations in proposition \ref{propDM}\footnote{Without loss of generality, we may (and do) assume that the constants $C_0$ and $A_0$ are of the form $C_0=2^a$ and $A_0=2^b$ for positive integers $a$ and $b$}. Using that, for each $j$ we now construct a filtration $\Sigma^{j,\mathcal{D}}$ such that (2) holds for all cubes $Q \in \mathcal{F}_j$ and some David-Mattila cubes in $\Sigma^{j,\mathcal{D}}$. This will end the proof by applying the same procedure to each of the dyadic systems $\mathcal{D}$ to end up with $N= 3^d N_0$ different David-Mattila filtrations.

Fix now the index $j$. As before, for each $Q \in \mathcal{F}_j$, denote by $B(Q)$ the ball with the same center as $Q$ and radius $r= \sqrt{d}\; \ell(Q)/2$. Note that by rule (i) for cubes in $\mathcal{F}_j$, $5B(Q) \cap 5B(Q')=\emptyset$ if $Q$ and $Q'$ belong to the same dyadic generation. Consider now each dyadic generation $\mathscr{D}_k$ such that $\mathcal{F}_j \cap \mathscr{D}_k\not=\emptyset$. We take $\mathcal{B}_0=\mathcal{B}_0(k)= \{5B(Q)\}_{Q \in \mathcal{F}_j \cap \mathscr{D}_k}$ and for each $x \in \mathrm{supp}(\mu) \setminus \cup_{B\in \mathcal{B}_0} B$ we take $B_x$ to be the $5$-fold dilate of the largest $(\alpha_0'',C_0)$-doubling ball centered at $x$ of radius $r(x)$ such that 
$$ 
C_0^{-1} 2^{-k-1}\sqrt{d} \leq r(x) \leq 2^{-k-1}\sqrt{d}.
$$
If there is no such doubling ball then take $r(x) = C_0^{-1} 2^{-k-1}\sqrt{d}$. Then apply lemma \ref{lemmaVitali} to $E=\mathrm{supp}(\mu)$. After applying lemma \ref{lemmaVitali} we have a collection $\mathcal{B}_1=\mathcal{B}_1(k)$ of balls that contains $\mathcal{B}_0$. The radii $r$ of the balls satisfy
$$
C_0^{-1} A_0^{m} \tilde{C}_0 \leq r \leq  A_0^{m} \tilde{C}_0,
$$
for some $1 \leq \tilde{C}_0 \leq C_0$ that depends on $j$ but not on $k$; indeed, it depends on $k\; \mathrm{mod} \; A_0$ which is constant within $\mathcal{F}_j$ because of rule (ii). Therefore, each family of balls $\mathcal{B}_1(k)$ is a suitable family of balls $B_T$ associated to David-Mattila cubes of one generation. From this point, we can follow step by step the argument in \cite{condealonso-parcet2018} to construct a filtration $\Sigma^{j,\mathcal{D}}$ with the properties of proposition \ref{propDMCAP}. We repeat the construction for each $j$ and then for $\mathcal{D}= \mathcal{D}^m$, $1 \leq m \leq N_0$ to get $N$ David-Mattila filtrations $\Sigma^{j, \mathcal{D}^m}$, $1\leq j \leq N_0$, $1 \leq m 
\leq 3^d$.

We can finally check both items in the statement of theorem \ref{thmA}: (1) follows by construction and for each $(\alpha_0,C_0)$-doubling cube $Q$, we can pick a David-Mattila cube $T \in \cup_{j,m} \Sigma^{j, \mathcal{D}^m}$ that satisfies all the properties in (2) in a similar way as in the case $N=\infty$. We first choose $m_0$ so that there is an $(\alpha_0',C_0)$-doubling dyadic cube $Q' \in \mathcal{D}^{m_0}$ with $Q \subset Q' \subset 6Q$. Second, we choose $j_0$ so that $Q' \in \mathcal{F}_{j_0}$. Then, $T$ is the David-Mattila cube of the filtration $\Sigma^{j_0, \mathcal{D}^{m_0}}$ such that $B_T=B(Q')$. It is immediate to check that all properties in (2) are satisfied and therefore the proof is complete.

\qed


\section{$\mathrm{RBMO}$ from martingale $\mathrm{RBMO}$} \label{sec2}


\subsection{The $\mathrm{RBMO}$ space of Tolsa} In order to define the appropriate $\mathrm{BMO}$ space for measures of $n$-polynomial growth we need to recall a way of comparing two cubes independent of their respective side lengths. In particular, given a pair of cubes or balls $Q$ and $R$, we define
$$
\delta(Q,R)= 1+ \int_{2R\setminus 2Q} \frac{d\mu(y)}{|x_Q -y|^n}.
$$
$\delta(Q,R)$ is a notion of distance between two cubes or balls $Q$ and $R$ with nontrivial intersection. We will always be considering cubes or balls with nontrivial intersection, and so we will not worry about $\delta(Q,R)$ when they are far away. The following easy properties of $\delta$ are going to be useful in the sequel:

\begin{lemma}[see \cite{tolsaRBMO}] \label{lemmaKQR}
The following hold:
\begin{itemize}
\item If $Q \subset R \subset T$, then $\max\{\delta(Q,R), \delta(R,T)\} \leq \delta(Q,T)$.
\item If $\ell(Q) \sim \ell(R)$ then $\delta(Q,R) \sim 1$.
\end{itemize}
\end{lemma}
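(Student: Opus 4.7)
The plan is to verify both items directly from the definition of $\delta$, using only monotonicity of integrals, the triangle inequality, and the $n$-polynomial growth of $\mu$; no delicate estimate is needed.

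For the first item, the bound $\delta(Q,R) \leq \delta(Q,T)$ is immediate: since $R \subset T$ implies $2R \setminus 2Q \subset 2T \setminus 2Q$ and the integrand $|x_Q - y|^{-n}$ is the same on both sides, the integral over the smaller set is at most that over the larger set, and adding $1$ to both preserves the inequality. For $\delta(R,T) \leq \delta(Q,T)$ (up to a dimensional constant), the inclusion $2Q \subset 2R$ gives $2T \setminus 2R \subset 2T \setminus 2Q$, so we only have to compare the two kernels on $2T \setminus 2R$. Since $Q \subset R$ forces $x_Q \in R$ and hence $|x_Q - x_R| \leq \tfrac{\sqrt d}{2}\ell(R)$, while $y \notin 2R$ forces $|x_R - y| \geq \ell(R)$, the triangle inequality gives $|x_Q - y| \leq (1 + \tfrac{\sqrt d}{2})|x_R - y|$, so $|x_R - y|^{-n}$ is bounded by a dimensional constant times $|x_Q - y|^{-n}$. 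Integrating and using the trivial lower bound $\delta(Q,T) \geq 1$ yields the claim.

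For the second item, recall that we are always in the case $Q \cap R \neq \varnothing$. Combined with $\ell(Q) \sim \ell(R)$, this means $2R$ is contained in some fixed dilate $C Q$ of $Q$, with $C = C(d)$. For $y \in 2R \setminus 2Q$ we have $|x_Q - y| \gtrsim \ell(Q)$, and by $n$-polynomial growth
\[
\mu(2R) \leq \mu\bigl(B(x_R, \sqrt d\, \ell(R))\bigr) \leq (\sqrt d\, \ell(R))^n \lesssim \ell(Q)^n.
\]
Therefore
\[
\int_{2R \setminus 2Q}\frac{d\mu(y)}{|x_Q - y|^n} \lesssim \frac{\mu(2R)}{\ell(Q)^n} \lesssim 1,
\]
so $\delta(Q,R) \sim 1$, the lower bound $\delta(Q,R) \geq 1$ being built into the definition.

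There is no genuine obstacle here; the only subtle point is the asymmetric comparison of kernels in the second half of the first item, where one must remember that $x_Q$ lies inside $R$ and $y$ lies outside $2R$ so that the two distances differ by at most a dimensional factor. Everything else is a set inclusion or a direct application of the $n$-polynomial growth hypothesis with $C_\mu = 1$.
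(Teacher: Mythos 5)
Your proof is correct, and since the paper does not prove this lemma (it simply cites Tolsa), yours is the natural argument one would supply. Two small remarks are worth recording. First, in the second half of item (1) your estimate gives $\delta(R,T)\leq\bigl(1+\tfrac{\sqrt d}{2}\bigr)^{n}\,\delta(Q,T)$, not a literal $\delta(R,T)\leq\delta(Q,T)$; this is in fact what is true in general (the kernel for $\delta(R,T)$ is anchored at $x_R$, which can be as far as $\tfrac{\sqrt d}{2}\ell(R)$ from $x_Q$), and it matches Tolsa's original statement of the analogous property of $K_{Q,R}$, where the second inequality carries a dimensional constant. So the ``$\leq$'' in the lemma as printed should be read as ``$\lesssim$'', as you correctly flagged. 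Second, in item (2) the hypothesis that $Q$ and $R$ intersect, which you invoke to place $2R$ inside a fixed dilate of $Q$, is not actually used in the estimate: once $\ell(R)\lesssim\ell(Q)$, the chain $\int_{2R\setminus 2Q}|x_Q-y|^{-n}\,d\mu\leq\mu(2R)/\ell(Q)^{n}\leq(\sqrt d\,\ell(R))^{n}/\ell(Q)^{n}\lesssim 1$ closes the argument by itself. That observation is harmless, just slightly redundant. Everything else is sound.
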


Fix two constants $\alpha >1$, $\beta > \alpha^d$. We say that a function $f$ belongs to $\mathrm{RBMO}$ if the following quantity is finite:
$$
\|f\|_{\mathrm{RBMO}(\alpha,\beta)} = \max \left\{ \|f\|_{\mathrm{DBMO}(\alpha,\beta)}, \|f\|_{\mathrm{RBMO}_d(\alpha,\beta)} \right\},
$$
where
$$
\|f\|_{\mathrm{DBMO}(\alpha,\beta)} = \sup_{Q \; (\alpha,\beta)-\mathrm{doubling}} \frac{1}{\mu(Q)} \int_Q \left|f -\langle f \rangle_Q \right| \; d\mu
$$
and
$$
\|f\|_{\mathrm{RBMO}_d(\alpha,\beta)} = \sup_{\begin{subarray}{c} Q \subset R \\ Q,R \; (\alpha,\beta)-\mathrm{doubling} \end{subarray}} \frac{\left|\langle f \rangle_Q - \langle f \rangle_R\right|}{\delta(Q,R)}.
$$
For us, $\langle f \rangle_Q$ denotes the integral average with respect to the measure $\mu$:
$$
\langle f \rangle_Q := \frac{1}{\mu(Q)} \int_Q f(x) \; d\mu(x).
$$
It can be seen that the above definition does not really depend on the constants $\alpha$ and $\beta$:

\begin{lemma} [see \cite{tolsaRBMO}] \label{lemmaalphabeta}
Let $\alpha, \alpha_0, \beta, \beta'$ be such that $\alpha,\; \alpha_0 > 1$, $\beta > \alpha^d$ and $\beta' > \alpha_0$. Then 
$$
\|f\|_{\mathrm{RBMO}(\alpha,\beta)} \sim_{\alpha,\alpha_0,\beta,\beta'} \|f\|_{\mathrm{RBMO}(\alpha_0,\beta')}.
$$
\end{lemma}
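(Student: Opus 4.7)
The statement is a classical result due to Tolsa \cite{tolsaRBMO}, and the plan is to sketch his argument adapted to the notation of this paper. By symmetry it suffices to prove one direction, say $\|f\|_{\mathrm{RBMO}(\alpha_0,\beta')} \lesssim \|f\|_{\mathrm{RBMO}(\alpha,\beta)}$, with the $\mathrm{DBMO}$ and $\mathrm{RBMO}_d$ seminorms handled separately.

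The cornerstone is a doubling-ancestor lemma: for every cube $P$, the concentric dilates $P, \alpha P, \alpha^2 P, \ldots$ produce an $(\alpha,\beta)$-doubling cube $\widetilde P = \alpha^{k_P} P$ for some finite $k_P$, and moreover $\delta(P, \widetilde P) \leq C(\alpha,\beta,d)$. Termination follows from polynomial growth: $k$ consecutive non-doubling dilates would give $\mu(\alpha^k P) > \beta^k \mu(P)$, which contradicts $\mu(\alpha^k P) \leq (\alpha^k \sqrt{d}\,\ell(P)/2)^d$ once $k$ is large, since $\beta > \alpha^d \geq \alpha^n$. For the $\delta$-bound I split the integral over $2\widetilde P \setminus 2P$ into annuli $\alpha^{j+1}P \setminus \alpha^j P$; on each, non-doubling gives $\mu(\alpha^{j+1} P) \leq \beta^{-(k_P - j - 1)} \mu(\widetilde P)$, combined with $\mu(\widetilde P) \lesssim (\alpha^{k_P}\ell(P))^n$ the resulting geometric series in powers of $\alpha^n/\beta < 1$ sums to a constant independent of $k_P$.

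Armed with this lemma, the $\mathrm{DBMO}(\alpha_0,\beta')$ bound proceeds by centering at $\langle f\rangle_{\widetilde Q}$ and using $Q \subset \widetilde Q$, followed by a chaining argument through $(\alpha,\beta)$-doubling cubes that interpolate between $Q$ and $\widetilde Q$, with telescoping differences controlled by $\|f\|_{\mathrm{RBMO}_d(\alpha,\beta)}$ via the $\delta$-bound. The $\mathrm{RBMO}_d(\alpha_0,\beta')$ bound for nested $(\alpha_0,\beta')$-doubling cubes $Q \subset R$ follows by inserting their doubling ancestors $\widetilde Q, \widetilde R$ (arranged appropriately so that the outer one contains the inner) and applying the triangle inequality
$$
|\langle f\rangle_Q - \langle f\rangle_R| \leq |\langle f\rangle_Q - \langle f\rangle_{\widetilde Q}| + |\langle f\rangle_{\widetilde Q} - \langle f\rangle_{\widetilde R}| + |\langle f\rangle_{\widetilde R} - \langle f\rangle_R|,
$$
where the middle term is at most $\|f\|_{\mathrm{RBMO}_d(\alpha,\beta)}\,\delta(\widetilde Q, \widetilde R) \lesssim \|f\|_{\mathrm{RBMO}_d(\alpha,\beta)}(1+\delta(Q,R))$ thanks to lemma \ref{lemmaKQR}, and the outer terms are absorbed by the preceding $\mathrm{DBMO}$-type chain. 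The main obstacle is that $\mu(\widetilde Q)/\mu(Q)$ can be of order $\beta^{k_Q}$ and hence unbounded, so one cannot naively dominate the oscillation of $f$ on $Q$ by that on $\widetilde Q$; the resolution is the intertwined use of both $\mathrm{DBMO}$ and $\mathrm{RBMO}_d$ seminorms along the intermediate chain, which is precisely the combination built into the definition of the $\mathrm{RBMO}$ norm.
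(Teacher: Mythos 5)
The paper does not prove this lemma; it simply cites Tolsa's original paper, so there is no in-paper argument to compare against. Your sketch is a reasonable reconstruction of the broad strategy, and the two auxiliary facts you isolate are indeed the heart of Tolsa's proof: (i) the doubling-ancestor construction $\widetilde P = \alpha^{k_P} P$ terminates by polynomial growth because $\beta > \alpha^d \geq \alpha^n$, and (ii) the annular decomposition $\int_{2\widetilde P \setminus 2P}$ over $\alpha^{j+1}P\setminus\alpha^j P$ combined with $\mu(\alpha^{j+1}P)\leq \beta^{-(k_P-j-1)}\mu(\widetilde P)$ and $\mu(\widetilde P)\lesssim (\alpha^{k_P}\ell(P))^n$ yields a geometric series in $\alpha^n/\beta < 1$, giving $\delta(P,\widetilde P)\leq C(\alpha,\beta,n)$. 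That computation is correct.

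The gap is in how you close the $\mathrm{DBMO}(\alpha_0,\beta')$ bound. You write that after centering at $\langle f\rangle_{\widetilde Q}$ one runs ``a chaining argument through $(\alpha,\beta)$-doubling cubes that interpolate between $Q$ and $\widetilde Q$,'' but by the very definition of $\widetilde Q = \alpha^{k_Q}Q$ as the first $(\alpha,\beta)$-doubling concentric dilate, every intermediate dilate $\alpha^j Q$ with $0 \leq j < k_Q$ is non-doubling, so there is no concentric chain of $(\alpha,\beta)$-doubling cubes between $Q$ and $\widetilde Q$; they all share the same doubling ancestor $\widetilde Q$. To make the telescope work one has to leave the concentric scale. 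Tolsa does this via an intermediate characterization of $\mathrm{RBMO}$ in terms of a family $\{f_Q\}_Q$ of constants indexed by all cubes (not only the doubling ones), together with a Besicovitch-type covering of $Q$ by small $(\alpha,\beta)$-doubling cubes furnished by the $\mu$-a.e.\ density of doubling scales, and a John--Nirenberg/good-$\lambda$ style argument to control the resulting potentially infinite telescope; the chain at each point of $Q$ has a variable and unbounded length, so one cannot simply sum a fixed number of $\delta$-increments. You correctly flag the obstacle $\mu(\widetilde Q)/\mu(Q) \sim \beta^{k_Q}$, but the sketched resolution does not supply the covering/stopping-time mechanism that actually handles it. A secondary remark: the hypotheses $\beta > \alpha^d$ and $\beta' > \alpha_0$ as printed are not symmetric (the latter is presumably a typo for $\beta' > \alpha_0^d$), so one should either correct the hypothesis or justify why the weaker condition on $\beta'$ still allows the termination and summability arguments for that pair of constants before invoking ``symmetry.''
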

Because of lemma \ref{lemmaalphabeta}, in what follows we will simply use the terms $\|f\|_{\mathrm{RBMO}}$, $\|f\|_{\mathrm{DBMO}}$ and $\|f\|_{\mathrm{RBMO}_d}$ without explicitly mentioning the associated constants $\alpha$ and $\beta$.


\subsection{Dyadic $\mathrm{RBMO}$ spaces} In \cite{condealonso-parcet2018} a martingale version of $\mathrm{RBMO}$ was introduced: given an appropriately constructed David-Mattila filtration $\mathscr{D}$, the family of elements in $\mathscr{D}$ which are doubling form a new two sided filtration that we denote $\Sigma=\{\Sigma_k\}_{k\in\mathbb{Z}}$. Then we define $\mathrm{RBMO}_\Sigma$ to be the martingale $\mathrm{BMO}$ space associated to the filtration $\Sigma$. The norm in $\mathrm{RBMO}_\Sigma$ is therefore given by
$$
\|f\|_{\mathrm{RBMO}_\Sigma} =\sup_{k} \left\|\mathsf{E}_{\Sigma_k}\left|f-\mathsf{E}_{\Sigma_{k-1}}f\right| \right\|_\infty \sim \sup_{Q \in \Sigma} \left[\frac{1}{\mu(Q)} \int_Q |f -\langle f \rangle_Q| d\mu + \left|\langle f \rangle_Q -\langle f \rangle_{\widehat{Q}}\right|\right].
$$
Above, $\mathsf{E}_{\Sigma_k}$ denotes the conditional expectation with respect to the $\sigma$-algebra generated by $\Sigma_k$ and $\widehat {Q}$ is the father of $Q\in \Sigma_k$ in $\Sigma$, that is, the only atom $R\in \Sigma_{k-1}$ that properly contains $Q$. As we stated in the introduction, $\mathrm{RBMO}_\Sigma$ enjoys two remarkable properties that make it useful (see \cite{condealonso-parcet2018}):

\begin{itemize}
\item It is a martingale $\mathrm{BMO}$ space, so it interpolates with the $L^p$ scale, and its predual is known. 
\item $\mathrm{RBMO} \subset \mathrm{RBMO}_\Sigma$, so operators that are bounded from $L^\infty(\mu)$ to $\mathrm{RBMO}$ are bounded from $L^\infty(\mu)$ to $\mathrm{RBMO}_\Sigma$ as well.
\end{itemize}
We now introduce the modified $\mathrm{RBMO}_\Sigma$ spaces that we will use in the proof of theorem \ref{thmBMOdyad}. It turns out that the quantity $\delta(Q,R)$ captures more information than just the distance in dyadic generations |David-Mattila ones, of course| between $Q$ and $R$. Therefore, we have to take them into account to define the right dyadic $\mathrm{BMO}$ spaces. We start by redefining the quantity $\delta$ as follows: given David-Mattila cubes $Q \subset R$ we set
$$
\delta(Q,R):= 1+ \int_{\alpha B_R\setminus \alpha B_Q} \frac{d\mu(y)}{|x_{B_Q} -y|^n}.
$$
The abuse of notation above is justified by the fact that if $Q$ and $R$ belong to $\Sigma$ then $\delta(Q,R) \sim \delta(B_Q,B_R)$, which follows directly from the definition and lemma \ref{lemmaKQR}. Then, given a doubling David-Mattila filtration $\Sigma$ we set
$$
\|f\|_{\mathrm{RBMO}_{\Sigma}^*} := \sup_{Q \in \Sigma} \left[\frac{1}{\mu(Q)} \int_Q |f -\langle f \rangle_Q| d\mu\right] + \sup_{Q\in \Sigma, j >0}\left|\frac{\langle f \rangle_Q -\langle f \rangle_{Q^{(j)}}}{\delta(Q,Q^{(j)})}\right|.
$$
In the above formula, $Q^{(j)}$ denotes the $j$-th dyadic ancestor of $Q\in \Sigma_k$, that is, the only $R\in \Sigma_{k-j}$ such that $Q \subset R$. From the definition, it immediately follows that $\mathrm{RBMO}_{\Sigma}^* \subset \mathrm{RBMO}_{\Sigma}$.

We take now $\{\Sigma^j\}_{j=1}^N$ to be the family of filtrations given by theorem \ref{thmA}, with $\alpha=480\sqrt{d}$ |the reason of this choice will be clear immediately. We have now defined all the elements in the statement of theorem \ref{thmBMOdyad}, that we restate here:
\begin{theorem}\label{thm23} 
Under the choice of the value of $\alpha$ above, we have
$$
\mathrm{RBMO} = \bigcap_{j=1}^N \mathrm{RBMO}_{\Sigma_j}^*,
$$
with equivalent norms.
\end{theorem}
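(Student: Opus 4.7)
The plan is to establish the two inclusions separately. The easy direction $\mathrm{RBMO} \subset \bigcap_{j=1}^N \mathrm{RBMO}_{\Sigma^j}^*$ is essentially bookkeeping. Fix $j$ and $f \in \mathrm{RBMO}$. Each atom $T \in \Sigma^j$ comes with an $(\alpha,C_0)$-doubling ball $B_T$ satisfying $B_T \cap \mathrm{supp}(\mu) \subset T \subset 30B_T \cap \mathrm{supp}(\mu)$, whence $\mu(T) \sim \mu(B_T) \sim \mu(30B_T)$. This lets one pass between averages over $T$ and averages over the doubling ball $B_T$ at cost $\|f\|_{\mathrm{DBMO}}$, which handles the oscillation piece of $\|f\|_{\mathrm{RBMO}_{\Sigma^j}^*}$. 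For the second piece, given $T \subset T^{(k)}$ in $\Sigma^j$, one replaces $\langle f\rangle_T$ and $\langle f\rangle_{T^{(k)}}$ by the averages over $B_T$ and $B_{T^{(k)}}$ and applies $\|f\|_{\mathrm{RBMO}_d}$ to this pair of doubling balls; the new definition of $\delta$ on David-Mattila cubes makes $\delta(T,T^{(k)}) \sim \delta(B_T,B_{T^{(k)}})$ automatic.

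The reverse inclusion is the substantive direction. Write $\|f\|_\cap := \max_j \|f\|_{\mathrm{RBMO}_{\Sigma^j}^*}$. The $\mathrm{DBMO}$ part of $\|f\|_{\mathrm{RBMO}}$ is immediate from theorem \ref{thmA}: for a doubling cube $Q$ one has $T \in \Sigma^{j_Q}$ with $Q \cap \mathrm{supp}(\mu) \subset T$ and $\mu(Q) \sim \mu(T)$, so
$$
\frac{1}{\mu(Q)} \int_Q |f-\langle f \rangle_Q|\,d\mu \leq \frac{2\mu(T)}{\mu(Q)}\,\frac{1}{\mu(T)}\int_T |f-\langle f \rangle_T|\,d\mu \lesssim \|f\|_\cap.
$$
For the $\mathrm{RBMO}_d$ part, given doubling $Q \subset R$, pick approximating atoms $T_Q \in \Sigma^{j_Q}$, $T_R \in \Sigma^{j_R}$ from theorem \ref{thmA} and splice
$$
|\langle f \rangle_Q - \langle f \rangle_R| \leq |\langle f \rangle_Q - \langle f \rangle_{T_Q}| + |\langle f \rangle_{T_Q} - \langle f \rangle_{T_Q^{(k)}}| + |\langle f \rangle_{T_Q^{(k)}} - \langle f \rangle_{T_R}| + |\langle f \rangle_{T_R} - \langle f \rangle_R|,
$$
where $T_Q^{(k)}$ is the first ancestor of $T_Q$ in $\Sigma^{j_Q}$ with $r(B_{T_Q^{(k)}}) \sim \ell(R)$. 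The first and fourth terms are $\lesssim \|f\|_\cap$ by the same oscillation trick used for $\mathrm{DBMO}$, and the second is $\leq \delta(T_Q, T_Q^{(k)})\,\|f\|_{\mathrm{RBMO}_{\Sigma^{j_Q}}^*} \lesssim \delta(Q,R)\,\|f\|_\cap$ directly from the definition of $\mathrm{RBMO}_{\Sigma^j}^*$ and the geometric comparability of the two flavors of $\delta$.

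The main obstacle is the third summand, $|\langle f \rangle_{T_Q^{(k)}} - \langle f \rangle_{T_R}|$, because $T_Q^{(k)}$ and $T_R$ live in different filtrations and theorem \ref{thmA} does not produce a single $\Sigma^j$ adapted to both $Q$ and $R$ simultaneously. The plan is to observe that both atoms are $(\alpha,C_0)$-doubling, have radii $\sim \ell(R)$, contain $Q \cap \mathrm{supp}(\mu)$, and are therefore contained in a fixed dilate $CR$ of $R$; the doubling of $R$ together with the specific choice $\alpha=480\sqrt{d}$ (so that $\alpha_0=6\sqrt{d}\alpha$ comfortably exceeds $C$) forces $\mu(T_Q^{(k)}) \sim \mu(T_R) \sim \mu(R)$. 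One then compares both averages to $\langle f \rangle_R$. The inequality $|\langle f \rangle_{T_R} - \langle f \rangle_R| \lesssim \|f\|_\cap$ is immediate from $R \subset T_R$ with comparable measures. For $|\langle f \rangle_{T_Q^{(k)}} - \langle f \rangle_R|$ one splits $\int_{T_Q^{(k)}} |f - \langle f \rangle_R|\,d\mu$ over $T_Q^{(k)} \cap R$ (bounded by the $\mathrm{DBMO}$ estimate on $R$ already obtained) and over $T_Q^{(k)} \setminus R \subset CR \setminus R$ (bounded by routing through a doubling enlargement of $R$ of measure $\sim \mu(R)$ and invoking $\mathrm{DBMO}$ there). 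Both contributions are $O(\|f\|_\cap)$ and absorb into $\delta(Q,R) \geq 1$, completing the proof.
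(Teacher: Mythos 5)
Your easy direction is fine and essentially the same as the paper's (the paper routes through auxiliary Euclidean cubes sandwiched between $30B_Q$ and $\frac{\alpha}{4}B_Q$ to guarantee genuine nesting, but the idea is the same). The hard direction, however, takes a genuinely different route from the paper, and that route has a gap at precisely the step you flag as the main obstacle, namely the cross-filtration comparison $|\langle f\rangle_{T_Q^{(k)}} - \langle f\rangle_{T_R}|$. Your plan requires $\mu(T_Q^{(k)}) \sim \mu(R)$, and this is not justified: theorem~\ref{thmA} only controls the measure of the atom \emph{directly adapted} to a given doubling cube (so $\mu(T_Q)\sim\mu(Q)$ and $\mu(T_R)\sim\mu(R)$), not the measure of an arbitrary ancestor of $T_Q$. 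The David--Mattila scales jump by the factor $A_0>5000\,C_0$ between consecutive generations, so there need not exist any ancestor of $T_Q$ with radius within a \emph{fixed} dilation constant of $\ell(R)$; the best you can guarantee is radius within a factor of order $A_0$ of $\ell(R)$. Consequently $T_Q^{(k)}$ is contained in $CR$ only for $C\sim A_0$, far outside the doubling range $\alpha_0 = 6\sqrt d\,\alpha$ assumed of $R$, so your fallback (routing through a doubling enlargement of $R$ of measure $\sim\mu(R)$ and invoking DBMO there) cannot be carried out. Both pieces of your split of $\int_{T_Q^{(k)}}|f-\langle f\rangle_R|\,d\mu$ end up with an uncontrolled factor $\mu(R)/\mu(T_Q^{(k)})$ or $\mu(T_Q^{(k)})/\mu(R)$.

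The paper avoids this by never trying to align a single ancestor atom with the large cube. Given doubling $T\subset S$, it picks $Q\in\Sigma^j$ adapted to the \emph{large} cube $S$ (so $\mu(Q)\sim\mu(S)$ is handed to you), notes $T\cap\mathrm{supp}(\mu)\subset Q$, and then decomposes the \emph{small} cube $T$ over the family $\mathcal{R}$ of maximal descendants $R$ of $Q$ contained in $10T$. The measure control then comes not from matching a single atom to $T$, but from disjointness plus the doubling of $T$:
$$
\sum_{R\in\mathcal R}\mu(R)\leq\mu(10T)\lesssim\mu(T),
$$
which is enough to bound $\bigl|\langle f\rangle_T - \sum_R\tfrac{\mu(T\cap R)}{\mu(T)}\langle f\rangle_R\bigr|$. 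The remaining terms $|\langle f\rangle_R-\langle f\rangle_Q|$ stay inside one filtration and are handled by $\delta(R,Q)\lesssim\delta(T,S)$, and $|\langle f\rangle_Q - \langle f\rangle_S|$ is a same-measure same-scale comparison. In short: adapt the atom to the bigger cube and go \emph{down}, not to the smaller cube and up; the $A_0$-gap that sinks your ancestor chain simply never appears.
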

The rest of this section is entirely devoted to its proof. 


\subsection{Proof of theorem \ref{thm23}} We start with the easier inequality, that is, 
\begin{equation}\label{easyineq}
\|f\|_{\mathrm{RBMO}_{\Sigma^j}^*} \lesssim \|f\|_{\mathrm{RBMO}},
\end{equation}
for any $j$ and any $f\in \mathrm{RBMO}$. Fix $Q \in \Sigma^j$. Denote by $T$ a Euclidean cube such that $30B_Q \subset T$ and $2T\subset \frac{\alpha}{4} B_Q$. Also, let $\widehat{T}$ be a cube such that $60B_{\widehat Q} \subset \widehat{T}$ and $2\widehat{T}\subset \frac{\alpha}{2} B_{\widehat{Q}}$. Then by our choice of $\alpha$ both $T$ and $\widehat{T}$ are $(2, C_0)$-doubling. Also, since $Q \subset \widehat{Q}$ then $T \subset \widehat{T}$. According to lemma \ref{lemmaalphabeta}, we may choose the doublingness constants in the $\mathrm{RBMO}$ norm to be $2$ and $C_0$. On the other hand, by lemma \ref{lemmaKQR} we see that $\delta(Q, \widehat{Q}) \sim \delta(T, \widehat{T})$. By the preceding discussion, we get
\begin{align*}
\frac{1}{\mu(Q)} \int_Q |f - \langle f \rangle_{Q}| d\mu & \leq \frac{1}{\mu(Q)} \int_Q |f - \langle f \rangle_{T}| d\mu + |\langle f \rangle_{T} - \langle f \rangle_{Q}| \\
& \leq 2\frac{1}{\mu(Q)} \int_Q |f - \langle f \rangle_{T}| d\mu \leq 2\frac{\mu(T)}{\mu(Q)} \frac{1}{\mu(T)} \int_T |f - \langle f \rangle_{T}| d\mu \\
& \lesssim \frac{1}{\mu(T)} \int_T |f - \langle f \rangle_{T}| d\mu \leq \|f\|_{\mathrm{RBMO}}. \\
\end{align*}
On the other hand, using the above computation we can also estimate
\begin{align*}
\left|\langle f \rangle_Q -  \langle f \rangle_{\widehat{Q}} \right| & \leq \left|\langle f \rangle_Q -  \langle f \rangle_{T}\right| + \left|\langle f \rangle_T -  \langle f \rangle_{\widehat{T}}\right| + \left|\langle f \rangle_{\widehat{T}} -  \langle f \rangle_{\widehat{Q}}\right| \\
 & \leq \frac{1}{\mu(Q)} \int_Q |f - \langle f \rangle_{T}| d\mu + \left|\langle f \rangle_T -  \langle f \rangle_{\widehat{T}}\right| + \frac{1}{\mu(\widehat{Q})} \int_{\widehat{Q}} |f - \langle f \rangle_{\widehat{T}}| d\mu \\
 & \leq 2\|f\|_{\mathrm{RBMO}} + \delta(T,\widehat{T}) \|f\|_{\mathrm{RBMO}} \lesssim \delta(Q,\widehat{Q}) \|f\|_{\mathrm{RBMO}}. \\
\end{align*}
Taking a supremum over $Q \in \Sigma^j$  and over $1\leq j \leq N$ yields \eqref{easyineq}. We are therefore left with the more difficult inequality, that is,
\begin{equation}\label{difficultineq}
\|f\|_{\mathrm{RBMO}} \lesssim \max_{1 \leq j \leq N} \|f\|_{\mathrm{RBMO}_{\Sigma^j}^*}.
\end{equation}
We now use lemma \ref{lemmaalphabeta} so that we may assume that the cubes that appear in the expression of the $\mathrm{RBMO}$ norm are $(\alpha_0,C_0)$-doubling, where $\alpha_0$ is such that theorem \ref{thmA} holds with David-Mattila cubes that are $(\alpha,C_0)$-doubling. That way, we can make sure that we can cover them by cubes in our filtrations $\Sigma^j$. We estimate the terms in the norm in turn. Fix an $(\alpha_0,C_0)$-doubling cube $T$, and let $Q\in \Sigma^j$ be the David-Mattila cube associated to $T$ via theorem \ref{thmA}, and recall that $T \subset Q$ and $\mu(T) \sim \mu(Q)$. Then we have, as above,
\begin{align*}
\frac{1}{\mu(T)} \int_T |f - & \langle f \rangle_{T} | d\mu \leq \frac{1}{\mu(T)} \int_T |f - \langle f \rangle_{Q}| d\mu + |\langle f \rangle_{Q} - \langle f \rangle_{T}| \\
& \leq 2\frac{1}{\mu(T)} \int_T |f - \langle f \rangle_{Q}| d\mu \lesssim \frac{1}{\mu(Q)} \int_Q |f - \langle f \rangle_{Q}| d\mu \leq \|f\|_{\mathrm{RBMO}}. \\
\end{align*}
We now have to estimate the term $|\langle f \rangle_T - \langle f \rangle_S| \delta(T,S)^{-1}$ for $T\subset S$. In this case, we take $Q \in \Sigma^j$ as the one given by theorem \ref{thmA} such that $S \subset Q$ and $\mu(S) \sim \mu(Q)$. We obviously have that $T \subset Q$, so we may consider the family $\{R\}_{R \in \mathcal{R}}$ of maximal descendants of $Q$ in $\Sigma^j$ that cover $T$ and such that $R \subset 10T$ for all $R \in \mathcal{R}$. Our splitting is now
\begin{align*}
\left|\langle f \rangle_T - \langle f \rangle_S\right| & \leq \left|\langle f \rangle_T - \sum_{R \in \mathcal{R}}\frac{\mu(T\cap R)}{\mu(T)} \langle f \rangle_R\right| + \left|\sum_{R \in \mathcal{R}}\frac{\mu(T\cap R)}{\mu(T)} \langle f \rangle_R - \langle f \rangle_Q\right| + \left|\langle f \rangle_Q - \langle f \rangle_S\right| \\
& =: \mathrm{I} + \mathrm{II} + \mathrm{III}. \\
\end{align*}  
By a computation entirely analogous to the ones above, we can see that 
$$
\mathrm{III} \lesssim \|f\|_{\mathrm{RBMO}_{\Sigma^j}^*} \leq \delta(T,S) \; \|f\|_{\mathrm{RBMO}_{\Sigma^j}^*} .
$$
For $\mathrm{I}$, we use the doubling property of $T$ in the following way: 
\begin{align*}
\mathrm{I} & = \frac{1}{\mu(T)} \left| \int_{T} f \: d\mu - \sum_{R \in \mathcal{R}}\langle f \rangle_R \int_{T\cap R} d\mu \right| = \frac{1}{\mu(T)} \left|\sum_{R \in \mathcal{R}} \int_{T \cap R} (f- \langle f \rangle_R)\: d\mu \right|  \\
& \leq \frac{1}{\mu(T)} \sum_{R \in \mathcal{R}} \int_{T \cap R} \left|f- \langle f \rangle_R\right| d\mu \leq \frac{1}{\mu(T)} \sum_{R \in \mathcal{R}} \frac{\mu(R)}{\mu(R)} \int_{R} \left|f- \langle f \rangle_R\right| d\mu \\
& \leq \frac{\sum_{R \in \mathcal{R}} \mu(R)}{\mu(T)} \|f\|_{\mathrm{RBMO}_{\Sigma^j}^*} \leq \frac{\mu(10T)}{\mu(T)} \|f\|_{\mathrm{RBMO}_{\Sigma^j}^*} \lesssim \|f\|_{\mathrm{RBMO}_{\Sigma^j}^*} \leq \delta(T,S) \; \|f\|_{\mathrm{RBMO}_{\Sigma^j}^*}, \\
\end{align*} 
since all the cubes $R \in \mathcal{R}$ are contained in $10T$. Finally, we can readily check that 
$$
\mathrm{II} \leq \left|\sum_{R \in \mathcal{R}}\frac{\mu(T\cap R)}{\mu(T)} (\langle f \rangle_R - \langle f \rangle_Q) \right| \leq  \sup_{R \in \mathcal{R}}|\langle f \rangle_R - \langle f \rangle_Q|  \lesssim \left[ \sup_{R \in \mathcal{R}} \delta(R,Q) \right] \|f\|_{\mathrm{RBMO}_{\Sigma^j}^*} .
$$
Therefore, it is enough to check that for $R\in \mathcal{R}$ we have
\begin{equation}\label{eqKQR}
\delta(R,Q) \lesssim \delta(T,S).
\end{equation}
This can be checked via the following calculation:
\begin{align*}
 \delta(R,Q) & = 1 + \int_{\alpha B_Q \setminus \alpha B_R } \frac{1}{|y-x_{B_R}|^n}\; d\mu(y) \\
 & = 1+ \int_{\alpha B_Q \setminus (2S\cup  \alpha B_R) } \frac{1}{|y-x_{B_R}|^n}\; d\mu(y) + \int_{2S \setminus (2T \cup \alpha B_R) } \frac{1}{|y-x_{B_R}|^n}\; d\mu(y)  \\
 & \;\;\;\;\;\;\; + \int_{ 2T \setminus \alpha B_R } \frac{1}{|y-x_{B_R}|^n}\; d\mu(y) =: 1 + \mathrm{I}'+ \mathrm{II}'+ \mathrm{III}' .\\
\end{align*}
$\mathrm{I}'$ and $\mathrm{III}'$ are bounded above by an absolute constant since the pairs $B_Q$ and $S$ (on the one hand) and $T$ and $B_R$ (on the other) have comparable radii and side length, respectively. For term $\mathrm{II}'$ we have
$$ 
\mathrm{II}' \lesssim \int_{4S \setminus 2T } \frac{1}{|y-x_{T}|^n}\; d\mu(y) \sim \delta(T,S).
$$
Therefore, we have \eqref{eqKQR} and the proof is complete. \qed

\begin{remark}
Contrary to what happens in the Lebesgue measure case, theorem \ref{thmBMOdyad} does not imply that the norm in $\mathrm{RBMO}$ can be computed as the average of translates and dilates of $\mathrm{RBMO}_\Sigma$ for a given $\Sigma$.
\end{remark}

\begin{remark}
Theorem \ref{thmBMOdyad} yields the chain of inclusions
$$
L^\infty(\mu) \subsetneq \mathrm{RBMO} = \bigcap_{j=1}^N \mathrm{RBMO}_{\Sigma^j}^* \subsetneq \bigcap_{j=1}^N \mathrm{RBMO}_{\Sigma^j}.
$$
Since both $L^\infty(\mu)$ and $\mathrm{RBMO}_{\Sigma_j}$ |for all $j$| interpolate with the $L^p$ scale, all spaces in the display above interpolate as well.
\end{remark}


\appendix
\section{Usual dyadic cubes in the nondoubling setting} \label{secappendix}

The usual one third trick can be applied in the nondoubling setting so long as the cubes involved are are all doubling. This can be applied to the $\mathrm{RBMO}$ norm. We shall use the following version of the trick, that is also used in its more standard version in section \ref{sec1}:
 
\begin{lemma}\label{lemma2cubos}
There exist $3^d$ dyadic systems $\mathcal{D}^1, \mathcal{D}^2, \cdots, \mathcal{D}^{3^d}$ on $\mathbb{R}^d$ such that for all pairs of cubes $Q_1$ and $Q_2$ there exists $1 \leq k \leq 3^d$ and cubes $T_1, T_2 \in \mathcal{D}^k$ such that
$$
Q_1 \subset T_1 \subset 6Q_1 \; \mathrm{and} \: Q_2 \subset T_2 \subset 6Q_2.
$$ 
\end{lemma}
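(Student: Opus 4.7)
The plan is to construct the $3^d$ dyadic systems as coordinate-wise products of $3$ shifted $1$-dimensional dyadic systems, and then argue via a per-coordinate pigeonhole. Concretely, for each $\alpha \in \{0, 1/3, 2/3\}$, let
$$ \mathcal{D}^\alpha = \bigl\{ 2^{-k}\bigl(m + (-1)^k\alpha + [0,1)\bigr) : k \in \mathbb{Z},\, m \in \mathbb{Z} \bigr\} $$
be the Lerner-style scale-alternating shifted dyadic system on $\mathbb{R}$; the sign flip $(-1)^k$ ensures nesting across scales and, crucially, forces the three shifts to produce grid origins that are equispaced modulo every dyadic scale $s$ (at spacing $s/3$). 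The $3^d$ systems are then $\mathcal{D}^\omega = \mathcal{D}^{\omega_1} \times \cdots \times \mathcal{D}^{\omega_d}$ for $\omega \in \{0, 1/3, 2/3\}^d$.

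First I would reduce to a $1$-dimensional statement. Since the dyadic cubes are Cartesian products, for any cube $Q \subset \mathbb{R}^d$ and any shift $\omega$, the existence of $T \in \mathcal{D}^\omega$ with $Q \subset T \subset 6Q$ factors as: for some common dyadic scale $s$, in each coordinate $i$, the projection $Q_i$ is contained in a $\mathcal{D}^{\omega_i}$-interval $T_i$ at scale $s$ with $T_i \subset 6 Q_i$. The key $1$D claim I would establish is: for any interval $I$, the set of good shifts $G(I) := \{\alpha \in \{0, 1/3, 2/3\} : \exists \, T \in \mathcal{D}^\alpha, \, I \subset T \subset 6I\}$ has cardinality at least $2$. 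The proof is by case analysis on the position of $I$ modulo the relevant dyadic scales $s \in [\ell(I), 6\ell(I)]$: the equispaced property forces each shift's "forbidden" region (where the shifted grid line crosses $I$'s interior or where $T$ fails to lie in $6I$) to occupy controlled measure, and the factor $6$ (as opposed to the classical $3$ of the one-third trick) provides the precise slack needed so that at most one of the three shifts is ever obstructed.

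Once the $1$D claim is established, the conclusion follows by pigeonhole on a three-element set applied coordinate-wise. For a pair $Q_1, Q_2$, the $1$D claim applied to the projections in coordinate $i$ yields $|G(Q_{1,i})|, |G(Q_{2,i})| \geq 2$, so by inclusion-exclusion $|G(Q_{1,i}) \cap G(Q_{2,i})| \geq 2 + 2 - 3 = 1$. Selecting one such $\omega_i$ for each coordinate produces $\omega \in \{0, 1/3, 2/3\}^d$ together with scales $s_1, s_2$ (one per cube, both coordinate-uniform) and dyadic cubes $T_1, T_2 \in \mathcal{D}^\omega$ witnessing $Q_j \subset T_j \subset 6 Q_j$ for $j = 1,2$.

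The hard part will be verifying the $1$D claim together with the bookkeeping to guarantee that the per-coordinate good shifts can be realized by a single cubic dyadic scale for each $Q_j$, rather than mismatched scales across coordinates. This is where the scale-alternating construction pays off, since the equispaced structure of the three shifts holds at every dyadic scale uniformly, allowing enough flexibility in choosing the scale $s_j$ so that the coordinate-wise pigeonhole argument can be carried out at a single scale per cube.
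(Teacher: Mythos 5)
The overall strategy you propose — take products of three shifted one-dimensional dyadic grids and run a coordinate-wise pigeonhole — is indeed the natural way to reach $3^d$ systems, and is in the spirit of the Hytönen--Lacey--Pérez lemma that the paper cites (the paper itself gives no proof, only the remark that it is a ``minimal variation'' of HLP, Lemma~2.5, whose cubes $Q_1,Q_2$ are themselves dyadic). Your reduction to a one-dimensional claim is the right move, and you correctly identify the scale bookkeeping as the crux.

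However, the key one-dimensional claim as you have formulated it is not correct at a fixed dyadic scale, which is exactly what the coordinate-wise pigeonhole needs (a dyadic cube in $\mathbb{R}^d$ must have the same scale in every coordinate, so after choosing $\omega_i\in G(Q_{1,i})\cap G(Q_{2,i})$ for each $i$ you must realize all $d$ coordinates of $T_j$ at a single scale $s_j$). Concretely, at scale $s$ the set of admissible $\theta:=\{(a-\alpha s)/s\}$ for which some $T\in\mathcal D^\alpha$ satisfies $I\subset T\subset 6I$ is an interval of length $\min\bigl((s-\ell)/s,\,(6\ell-s)/s\bigr)$, and this exceeds $2/3$ (the threshold that guarantees at least two of three equispaced shifts land inside) only when $3\ell\le s\le 3.6\ell$. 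Since consecutive dyadic scales differ by a factor $2>1.2$, there need be no dyadic $s$ in that window. A concrete obstruction: with $\ell=1$ and $I=[4/3-\epsilon,\,7/3-\epsilon]$ for small $\epsilon>0$, at $s=2$ the grid lines $2$ and $4/3$ each cross $I$ so two shifts fail, while at $s=4$ one shift is killed by the grid line $4/3$ and another by $T=[-4/3,8/3]\not\subset 6I=[-7/6-\epsilon,\,29/6-\epsilon]$; in either case $|G(I;s)|=1$. The $(-1)^k$ alternation does not rescue this: at every scale it merely permutes the labels of the three shifts while producing the same three equispaced grid lines, so the set of obstructed shifts is unchanged. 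The flexible (scale-free) version $|G(I)|\ge 2$ is true, and is enough to close the argument when $d=1$, but for $d\ge 2$ you need either (i) a larger dilation factor than $6$ (for instance, dilation $12$ makes the window $3\ell\le s\le 7.2\ell$ contain a dyadic scale, since $7.2/3>2$), (ii) more than three shifts per coordinate (so that ``good with margin'' is attainable with a longer good region), or (iii) a genuinely different bookkeeping that tracks the interplay of scales and shifts across coordinates. As written, the proof does not go through, and the flagged ``hard part'' is a real gap, not a routine verification.
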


Lemma \ref{lemma2cubos} is essentially known. Its proof is a minimal variation of lemma 2.5 of \cite{hytonen-lacey-perez} (the only difference with the result there is the fact that the cubes $Q_1$ and $Q_2$ in the statement of lemma \ref{lemma2cubos} need not be dyadic).

\begin{remark}
Following the arguments in \cite{conde2013} (see also \cite{condealonsoPhd}) one can see that the optimal number of dyadic systems such that lemma \ref{lemma2cubos} holds is $2d+1$.
\end{remark}

If $\mathcal{D}$ is a dyadic filtration, we can define the dyadic version of $\mathrm{RBMO}$ by
\begin{align*}
\|f\|_{\mathrm{RBMO}_{\alpha,\beta,\mathcal{D}}} & := \sup_{\begin{subarray}{c} Q \in \mathcal{D}\\ Q \; (\alpha,\beta)-\mathrm{doubling} \end{subarray}} \frac{1}{\mu(Q)} \int_Q \left| f - \langle f \rangle_Q \right| d\mu \\
& + \sup_{\begin{subarray}{c} Q,R \in \mathcal{D}\\ Q,R \; (\alpha,\beta)-\mathrm{doubling} \end{subarray}} \left|\frac{\langle f \rangle_Q - \langle f \rangle_R}{\delta(Q,R)}\right|.
\end{align*}
Immediately, we get the following:

\begin{corollary} \label{usualddyadcubes}
Fix $\alpha\geq 2$ and $\beta > (6\alpha)^d$. Let $\mathcal{D}^j$, $1\leq j \leq 2d+1$, be the dyadic filtrations given by lemma \ref{lemma2cubos}. Then we have
$$
\|f\|_{\mathrm{RBMO}} \sim  \sup_{1\leq j \leq 3^d} \|f\|_{\mathrm{RBMO}_{\alpha,\beta,\mathcal{D}^j}}.
$$
\end{corollary}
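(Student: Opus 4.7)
The plan is to prove the two inequalities separately. The direction $\sup_j \|f\|_{\mathrm{RBMO}_{\alpha,\beta,\mathcal{D}^j}} \lesssim \|f\|_{\mathrm{RBMO}}$ is immediate: after using lemma \ref{lemmaalphabeta} to align the doublingness parameters of $\|f\|_{\mathrm{RBMO}}$ with $(\alpha,\beta)$, each dyadic supremum is taken over a subfamily of the one defining $\|f\|_{\mathrm{RBMO}}$.

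For the substantive direction $\|f\|_{\mathrm{RBMO}} \lesssim \sup_j \|f\|_{\mathrm{RBMO}_{\alpha,\beta,\mathcal{D}^j}}$, I would first invoke lemma \ref{lemmaalphabeta} once more to compute $\|f\|_{\mathrm{RBMO}}$ with parameters $(6\alpha,\beta)$, which is legal since $\beta>(6\alpha)^d$. Given any $(6\alpha,\beta)$-doubling cube $Q$, lemma \ref{lemma2cubos} applied to $Q_1=Q_2=Q$ produces a dyadic $T\in\mathcal{D}^j$ with $Q\subset T\subset 6Q$. The key check is that $\mu(\alpha T)\leq \mu(6\alpha Q)\leq \beta\mu(Q)\leq \beta\mu(T)$, so $T$ is $(\alpha,\beta)$-doubling and admissible in the dyadic norm, and simultaneously $\mu(T)\lesssim \mu(Q)$; the oscillation piece of the RBMO norm is then controlled by a standard average-swap through $T$. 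For the $\delta$ piece, given a pair $Q\subset R$ of $(6\alpha,\beta)$-doubling cubes, the lemma yields $T_1,T_2\in \mathcal{D}^j$ with $Q\subset T_1\subset 6Q$ and $R\subset T_2\subset 6R$, both $(\alpha,\beta)$-doubling. Dyadicity forces $T_1\subset T_2$ or $T_2\subset T_1$; the latter implies $\ell(Q)\sim\ell(R)$ and is handled directly via lemma \ref{lemmaKQR} and the triangle inequality. In the main case $T_1\subset T_2$, the decomposition
$$
|\langle f\rangle_Q-\langle f\rangle_R|\leq |\langle f\rangle_Q-\langle f\rangle_{T_1}|+|\langle f\rangle_{T_1}-\langle f\rangle_{T_2}|+|\langle f\rangle_{T_2}-\langle f\rangle_R|
$$
bounds the outer terms by $\|f\|_{\mathrm{RBMO}_{\alpha,\beta,\mathcal{D}^j}}$ via average-swap and the middle one by $\delta(T_1,T_2)\|f\|_{\mathrm{RBMO}_{\alpha,\beta,\mathcal{D}^j}}$.

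The main obstacle — the only step that is not a routine triangle inequality — is the estimate $\delta(T_1,T_2)\lesssim \delta(Q,R)$. My plan is to apply lemma \ref{lemmaKQR} twice, to the chains $T_1\subset T_2\subset 6R$ and $Q\subset T_1\subset 6R$, yielding $\delta(T_1,T_2)\leq \delta(T_1,6R)\leq \delta(Q,6R)$, and then to compare $\delta(Q,6R)$ with $\delta(Q,R)$ via
$$
\delta(Q,6R)-\delta(Q,R)=\int_{12R\setminus 2R}\frac{d\mu(y)}{|x_Q-y|^n}.
$$
For $y$ in this annulus one has $|x_Q-y|\gtrsim \ell(R)$ (since $x_Q\in R$ and $y\notin 2R$), so $n$-polynomial growth combined with the $(12,\beta)$-doubling of $R$ (a consequence of its $(6\alpha,\beta)$-doubling because $\alpha\geq 2$) bounds this integral by $\mu(12R)/\ell(R)^n\lesssim 1$. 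Assembling the pieces and taking suprema over $(6\alpha,\beta)$-doubling cubes and over $j$ then completes the proof.
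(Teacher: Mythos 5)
Your proposal is correct and follows the same route as the paper: discretize via lemma \ref{lemma2cubos}, verify doublingness of the dyadic $T$'s, and triangulate through $T_1, T_2$ with lemma \ref{lemmaKQR}. The only place you go further is the estimate $\delta(T_1,T_2)\lesssim\delta(Q,R)$, which the paper simply attributes to lemma \ref{lemmaKQR} without detail; your explicit case split on whether $T_1\subset T_2$ or $T_2\subset T_1$, and the annulus computation bounding $\delta(Q,6R)-\delta(Q,R)$ by a constant via polynomial growth, correctly fill in that step (and indeed polynomial growth alone already gives $\mu(12R)\lesssim\ell(R)^n$, so invoking the doubling of $R$ there is not even necessary).
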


\begin{proof}
First, it is immediate that
$$
\|f\|_{\mathrm{RBMO}_{\alpha,\beta,\mathcal{D}^j}} \leq \|f\|_{\mathrm{RBMO}}
$$
for all $j$. For the reverse inclusion, we may assume that the cubes in the definition of the $\mathrm{RBMO}$ norm are $(6\alpha,\beta)$-doubling. Then, given such a cube $Q$ we may use lemma \ref{lemma2cubos} to find an index $k$ and a cube $T \in \mathcal{D}^k$ such that $Q \subset T \subset 6Q$. We know that
$$
\mu(\alpha T) \leq \mu(6\alpha Q) \leq \beta \mu(Q) \leq \beta \mu(T),
$$
so $T$ is $(\alpha,\beta)$-doubling. Therefore, we can estimate
\begin{align*}
\frac{1}{\mu(Q)} \int_Q \left| f - \langle f \rangle_Q\right| d\mu & \leq \frac{1}{\mu(Q)} \int_Q \left| f - \langle f \rangle_T\right| d\mu + \left| \langle f \rangle_{T} - \langle f \rangle_Q \right| \leq 2\frac{1}{\mu(Q)} \int_Q \left| f - \langle f \rangle_T\right| d\mu \\
& \leq 2\frac{\mu(T)}{\mu(Q)} \frac{1}{\mu(T)} \int_T \left| f - \langle f \rangle_T\right| d\mu  \leq 2\frac{\mu(6\alpha Q)}{\mu(Q)} \frac{1}{\mu(T)} \int_T \left| f - \langle f \rangle_T\right| d\mu \\
& \lesssim \|f\|_{\mathrm{RBMO}_{\alpha,\beta,\mathcal{D}^k}}.
\end{align*}
Finally, given $(6\alpha,\beta)$-doubling cubes $Q$ and $R$ we apply lemma \ref{lemma2cubos} with $Q_1=Q$ and $Q_2=R$. Then, by lemma \ref{lemmaKQR} and the computation above we find that
\begin{align*}
\left| \langle f \rangle_Q - \langle f \rangle_R \right| & \leq \left| \langle f \rangle_Q - \langle f \rangle_{T_1} \right| + \left| \langle f \rangle_{T_2} - \langle f \rangle_R \right| + \left| \langle f \rangle_{T_1} - \langle f \rangle_{T_2} \right| \\
& \leq \frac{1}{\mu(Q)} \int_Q \left| f - \langle f \rangle_{T_1}\right| d\mu + \frac{1}{\mu(R)} \int_R \left| f - \langle f \rangle_{T_2}\right| d\mu +  \left| \langle f \rangle_{T_1} - \langle f \rangle_{T_2} \right| \\  
& \lesssim \|f\|_{\mathrm{RBMO}_{\alpha,\beta,\mathcal{D}^k}} + \delta(T_1,T_2) \|f\|_{\mathrm{RBMO}_{\alpha,\beta,\mathcal{D}^k}}  \\
& \lesssim \delta(Q,R) \|f\|_{\mathrm{RBMO}_{\alpha,\beta,\mathcal{D}^k}}.
\end{align*}
This shows that
$$
\|f\|_{\mathrm{RBMO}} \lesssim \sup_{1\leq j \leq 3^3} \|f\|_{\mathrm{RBMO}_{\alpha,\beta,\mathcal{D}^j}}
$$
\end{proof}
Corollary \ref{usualddyadcubes} is more similar to Mei's statement in \cite{mei2003} and simpler, because it only relies on usual dyadic cubes. However, the spaces $\mathrm{RBMO}_{\mathcal{D}^j}$ are not martingale $\mathrm{BMO}$ spaces in general (in fact, the quantity $\|f\|_{\mathrm{RBMO}_{\alpha,\beta,\mathcal{D}}}$ need not be a norm modulo constants). It is far from clear whether they interpolate with the $L^p$ scale or not in case $\mu$ is not doubling. This indicates that our approach via David-Mattila construction yields a more useful result.


 \bibliography{BibliographyRBMOdyad}{}
 \bibliographystyle{abbrv}

\end{document}